\newtheorem{definition}{Definition}
\newtheorem{fig}{Figure}
\newtheorem{theorem}{Theorem}
\newtheorem{lemma}{Lemma}
\newtheorem{proposition}{Proposition}
\newtheorem{corollary}{Corollary}
\newenvironment{proof}{\textbf{Proof} :}{$\square$}
\title{A Sufficient Condition for J\'onsson's Conjecture and its Relationship with Finite Semidistributive lattices}
\author{Brian T. Chan}
\date{University of Calgary}
\begin{document}

\maketitle

\begin{abstract}
This article is part of my upcoming masters thesis which investigates the following open problem from the book, Free Lattices, by R.Freese, J.Jezek, and J.B. Nation published in 1995 \cite{FL}: ``Which lattices (and in particular which countable lattices) are sublattices of a free lattice?'' \\

Despite partial progress over the decades, the problem is still unsolved. There is emphasis on the countable case because the current body of knowledge on sublattices of free lattices is most concentrated on when these sublattices are countably infinite. \\

It is known that sublattices of free lattices which are \emph{finite} can be characterized as being those lattices which satisfy Whitman's condition and are semidistributive. This assertion was conjectured by B. J\'onsson in the 1960's and proven by J.B. Nation in 1980. However, there is a desire for a new proof to this deep result (see \cite{FL}) as Nation's proof is very involved and more insight into sublattices of free lattices is sought after. \\

In this article, a sufficient condition involving a construct known as a \emph{join minimal pair}, or just a minimal pair, implying J'onsson's conjecture is proven. Minimal pairs were first defined by H. Gaskill in \cite{SRTL} when analysing \emph{sharply transferable lattices}. Using this sufficient condition, research by I.Rival and B.Sands (\cite{PSL} and \cite{PSL2}) is used to compare this condition with properties of finite semidistributive lattices and in the process refute the main assertion of a manuscript by H.M\"uhle (\cite{HM}). Moreover, inspired by the approaches used by Henri M\"uhle in \cite{HM}, I will make a partial result which investigates a possible forbidden sublattice characterization involving breadth-two planar semidistributive lattices. To the best of my knowledge, the two results of this article (the sufficient condition for J\'onsson's conjecture and the partial result aforementioned) are new. \\
\end{abstract}

\section{The $\kappa$ function on finite semidistributive lattices}

On finite semidistributive lattices there is a special bijective function in the literature, denoted by $\kappa$ in \cite{FL}, that matches join irreducible elements with meet irreducible elements and vice versa. Roughly speaking, this functions describes the structural symmetry a finite semidistributive lattice has (interestingly enough, this function is not even an order preserving map).\\

This function can be found in both \cite{FL} and \cite{VL}. We will make explicit the relationship, described in \cite{FL}, between join irreducible elements and the image of these elements under $\kappa$. \\

Let $L$ be a finite semidistributive lattice, and $x \in J(L)$. If $y,z \in L$ satisfy $y \geq x_*$, $z \leq x_*$, $y \ngeq x$, and $z \nleq x$ then we have $y+z \geq x_*$ and $y+z \ngeq x$. This is because the above is equivalent to saying: $yx = x_*$ and $zx = x_*$ implies $(y+z)x = x_*$. \\

Due to this we can define $\kappa(x) = \bigvee \{y \in L : yx = x_* \}$. This element, $\kappa(x)$, is the largest element $s$ of $L$ satisfying $s \geq x_*$ and $s \ngeq x$. Moreover, $\kappa(x)$ can be seen to be meet irreducible: if $\kappa(x) = ab$ for some $a,b \in$ $\uparrow x$ then $\kappa(x) = ab \in$ $\uparrow x$, contrary to the definition of $\kappa(x)$. Consider the following figure, this being Figure 2.5 from \cite{FL}. We note that $[a_*, \kappa(a)^*] = [a_*, \kappa(a)] \cup [a, \kappa(a)^*]$:

\begin{fig} \label{kappa}
	
	\begin{center}
		\begin{tikzpicture} [scale=1.5]
		
		\node(1) at (1,1) {$\kappa(a)^*$};
		
		\node(a) at (-1,0) {$a$};
		\node(b) at (1,0) {$\kappa(a)$};
		
		\node(0) at (-1,-1) {$a_*$};
		
		\draw (a) -- (0);
		\draw (b) -- (1);
		
		\draw (a) -- (1);
		\draw (0) -- (b);
		
		\end{tikzpicture}
	\end{center}
	
\end{fig}

It suggests that there is a one-to-one correspondence between join irreducible elements and meet irreducible elements. To see this, we note that $a$ is the smallest element of $\{s \in L : s \kappa(a) = \kappa(a)^*\}$, using the same arguments shown above. Hence, $\kappa : J(L) \to M(L)$ is a bijection. We also note, by virtue of the covering relation, that $a + \kappa(a) = \kappa(a)^*$ and $a \kappa(a) = a_*$. \\

The converse, that for finite lattices the existence of such a bijection, $\kappa : J(L) \to M(L)$, as described above implies semidistributivity, is also true and can be seen as follows. We use the short argument found in \cite{FL}, see Theorem 2.56 of that text. Without loss of generality, suppose that for some $a,b,c \in L$, $x = ab = ac$ but $x < a(b+c)$. Then $\kappa(a(b+c))$ does not exist since if it did, $\kappa(a(b+c)) \geq b$ and $\kappa(a(b+c)) \geq c$. \\

We now present this nice duality result from J.B. Nation.

\begin{lemma} [J.B. Nation. \cite{FL}, finite case of lemma 2.63] \label{duality}
	If $L$ is finite and semidistributive and $\kappa: J(L) \to M(L)$ is its bijection (as described above) then for all $a,b \in J(L)$: $a A b$ iff $\kappa(a) B^d \kappa(b)$, and $a B b$ iff $\kappa(a) A^d \kappa(b)$. The relations $A^d$ and $B^d$ are the dual definitions of $A$ and $B$ respectively on $M(L)$.
\end{lemma}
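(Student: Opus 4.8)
The strategy is to transport each of the two claimed equivalences across the bijection $\kappa$ by first assembling a short ``dictionary'' which rewrites any assertion about the join operation and the lower-cover map $x \mapsto x_*$ (applied to elements of $J(L)$) as an assertion about the meet operation and the upper-cover map $y \mapsto y^*$ (applied to the corresponding elements of $M(L)$). Since the defining clauses of $a \mathrel{A} b$ and of $a \mathrel{B} b$ use only $a$, $b$, $a_*$, $b_*$, joins, and the order relation, once the dictionary is available each such clause acquires a mechanical counterpart among $\kappa(a)$, $\kappa(b)$, $\kappa(a)^*$, $\kappa(b)^*$, meets, and the order relation, and the lemma reduces to checking that the rewritten form of the condition defining $A$ is precisely the definition of $B$ read in the order dual $L^d$ (and symmetrically with $A$ and $B$ interchanged).

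The central dictionary entry is: for $a, b \in J(L)$,
\[
b \le \kappa(a) \iff a \nleq a_* + b .
\]
For the forward direction, note that $\kappa(a) \ge a_*$ and $\kappa(a) \ngeq a$, so if $b \le \kappa(a)$ then $a_* + b \le \kappa(a)$ and hence $a \nleq a_* + b$ (otherwise $\kappa(a) \ge a_* + b \ge a$); for the reverse, if $a \nleq a_* + b$ then $a_* + b \ge a_*$ and $a_* + b \ngeq a$, so $(a_* + b)a = a_*$, whence $a_* + b \le \kappa(a)$ by the join formula defining $\kappa(a)$ together with the implication recorded above that $ya = a_* = za$ forces $(y + z)a = a_*$, and in particular $b \le \kappa(a)$. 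Reading this statement in the order dual $L^d$ --- in which $J(L^d) = M(L)$, the lower-cover map is $y \mapsto y^*$, and, by the observation above that $a$ is the smallest $s$ with $s\kappa(a) = \kappa(a)^*$, the $\kappa$-function of $L^d$ is $\kappa^{-1}$ --- yields the dual entry
\[
b \le \kappa(a) \iff \kappa(b) \ngeq \kappa(b)^* \cdot \kappa(a) \qquad (a, b \in J(L)),
\]
together with its variant obtained by interchanging $a$ and $b$; to these I would adjoin the bookkeeping facts already in hand, namely $a + \kappa(a) = \kappa(a)^*$, $a\kappa(a) = a_*$, and $a = b \iff \kappa(a) = \kappa(b)$.

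With the dictionary in place I would take the definition of $a \mathrel{A} b$, replace each of its clauses by the corresponding statement about $\kappa(a)$ and $\kappa(b)$ via the displayed equivalences (and their $a \leftrightarrow b$ variants), and verify that the result is verbatim the definition of the relation $B$ evaluated in $L^d$ on $\kappa(a)$ and $\kappa(b)$, that is, $\kappa(a) \mathrel{B^d} \kappa(b)$; this gives the first equivalence. The second, $a \mathrel{B} b \iff \kappa(a) \mathrel{A^d} \kappa(b)$, then follows either by rerunning the computation with the roles of $A$ and $B$ exchanged, or more economically by applying the first equivalence inside $L^d$ and using that the $\kappa$-function of $L^d$ is $\kappa^{-1}$ while $(X^d)^d = X$ for any relation $X$ given uniformly by such a formula on join-irreducibles.

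The part I expect to be the real work is the duality bookkeeping rather than the lattice algebra: because $\kappa$ is a bijection but not order preserving, a clause of the form ``$b$ is minimal such that $P(b)$ holds'' (should such a clause appear in the definition of $A$ or $B$) must be shown to translate under $\kappa$ into ``$\kappa(b)$ is maximal such that $P^d(\kappa(b))$ holds'', and this has to be derived from the dictionary entries applied element by element, not assumed. One must also check carefully that every lower cover $x_*$ downstairs turns into an upper cover $x^*$ upstairs and that side conditions such as $a \ne b$ are carried along intact; these are the only genuinely delicate points, and once they are dealt with what remains is a short, symmetric verification.
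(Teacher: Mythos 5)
Your proposal is correct and follows essentially the same route as the paper's proof: your dictionary entry $b \le \kappa(a) \iff a \nleq a_* + b$ is exactly the equivalence the paper uses (in the two instances $a_* \vee b_* \ngeq a \iff b_* \le \kappa(a)$ and $a_* \vee b \geq a \iff b \nleq \kappa(a)$) to derive the symmetric forms $a\,A\,b \iff b < a \le \kappa(b)^*$ and $a\,B\,b \iff b_* \le \kappa(a) < \kappa(b)$, which are then exchanged by applying $\kappa$ and order-dualizing. Carrying out your clause-by-clause translation produces precisely these two characterizations, so the duality bookkeeping you flag at the end is the same short verification the paper performs.
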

\begin{proof}[This is from \cite{FL}]
	Using the definition of $\kappa$, we show that $a A b$ iff $b < a \leq \kappa(b)^*$ and that $a B b$ iff $b_* \leq \kappa(a) < \kappa(b)$. When $a A b$, $b < a$ and $a \leq b \vee \kappa(b) = \kappa(b)^*$. Hence $a A b$ iff $b < a \leq \kappa(b)^*$. With $a B b$, $a_* \vee b_* \ngeq a$ iff $b_* \leq \kappa(a)$ and $a_* \vee b \geq a$ iff $b \nleq \kappa(a)$. So we have $b_* \leq \kappa(a) \leq \kappa(b)$. Because $\kappa$ is a bijection, we conclude that $a B b$ iff $b_* \leq \kappa(a) < \kappa(b)$.
\end{proof}

From the above proof we see :

\begin{corollary}[\cite{FL}] \label{sym form}
	Let $L$ be a finite semidistributive lattice and $a,b \in J(L)$. Then $a A b$ iff $b < a \leq \kappa(b)^*$ and $a B b$ iff $b_* \leq \kappa(a) < \kappa(b)$.
\end{corollary}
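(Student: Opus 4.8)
The proof is a matter of recording two biconditionals that already appeared, as intermediate steps, inside the proof of Lemma~\ref{duality}; the plan is simply to isolate and re-present them, invoking only the properties of $\kappa$ collected before that lemma.

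For the first equivalence I would argue as in the proof of Lemma~\ref{duality}: if $a A b$, then the defining condition of $A$ supplies both $b<a$ and a join-cover of $a$ above $b$, and since $\kappa(b)$ is the \emph{largest} element $s$ with $s\geq b_*$ and $s\ngeq b$, that cover may be taken to be $a\leq b\vee\kappa(b)$, which equals $\kappa(b)^*$ by the covering identity $b\vee\kappa(b)=\kappa(b)^*$ noted before the lemma. Conversely, from $b<a\leq\kappa(b)^*=b\vee\kappa(b)$ one reads off the defining condition of $A$ with $\kappa(b)$ as the witness, the cover being nontrivial because $b<a\leq\kappa(b)$ would force $\kappa(b)\geq b$, contradicting the definition of $\kappa(b)$. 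This gives: $a A b$ iff $b<a\leq\kappa(b)^*$.

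For the second equivalence I would use the analogous extremal property of $\kappa(a)$, namely that it is the largest $s$ with $s\geq a_*$ and $s\ngeq a$: this makes $a_*\vee b_*\ngeq a$ equivalent to $b_*\leq\kappa(a)$, and $a_*\vee b\geq a$ equivalent to $b\nleq\kappa(a)$. Hence the defining condition of $B$ reduces to ``$b_*\leq\kappa(a)$ and $b\nleq\kappa(a)$'', and by the maximality of $\kappa(b)$ the latter yields $\kappa(a)\leq\kappa(b)$; since $a\neq b$ and $\kappa : J(L)\to M(L)$ is a bijection, this sharpens to $\kappa(a)<\kappa(b)$. Reversing the equivalences gives the converse, so: $a B b$ iff $b_*\leq\kappa(a)<\kappa(b)$.

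The only step requiring genuine care --- and the one I would spell out in full rather than leave implicit --- is each converse direction, where one must use that $\kappa(b)$ (respectively $\kappa(a)$) is the \emph{largest} element with the stated property, and not merely one such element, in order to recover the raw definitions of $A$ and $B$ from the $\kappa$-inequalities. Everything else is a transcription of the computation already carried out for Lemma~\ref{duality}.
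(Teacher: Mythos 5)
Your proposal is correct and follows essentially the same route as the paper: the corollary is read off from the two biconditionals established inside the proof of Lemma~\ref{duality}, using the maximality property of $\kappa$, the covering identity $b \vee \kappa(b) = \kappa(b)^*$, and the injectivity of $\kappa$ to sharpen $\kappa(a) \leq \kappa(b)$ to a strict inequality. The extra care you devote to the converse directions only makes explicit what the paper leaves implicit; it is not a different argument.
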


We first describe what (join) minimal pairs are:

\begin{definition}{Gaskill, Gr\"atzer, and Platt \cite{STL}}

Let $L$ be a finite lattice, $a \in L$ and $B \subseteq L$. Then $\langle a ; B \rangle$ is a (join) minimal pair if $a \leq \bigvee B$ and for any $B' \subseteq L$ satisfying $a \leq \bigvee B'$, if for all $x \in B'$ there exists a $y \in B$ such that $x \leq y$ then $B \subseteq B'$

\end{definition}

\section{C-cycles viewed as Cycles of minimal pairs}

An application of the $\kappa$ function on finite semidistributive lattices is related to J\'onsson's conjecture. Specifically when using an important tool used by Nation and others known as a $C-$cycle (see \cite{FL}). J'onsson's conjecture, which was proven by J.B.Nation in 1980, states that every finite lattice that is semidistributive and satisfies Whitman's condition is a sublattice of a free lattice. The application is a small result that I made, and it does not seem to appear explicitly elsewhere in the literature. By viewing the main tool used by Nation to prove J\'onsson's conjecture using cycles of minimal pairs, as emphasized in \cite{STL}, I will make a small observation that gives a subtly different way of approaching Jonsson's conjecture. \\

The small observation is theorem \ref{fruitful}, involving cycles of minimal pairs. For brevity, we adopt the convention from \cite {FL} and call an \emph{S-lattice} a finite semidistributive lattice that satisfies Whitman's condition. We do not plan to go into the deep machinery of Nation's proof of J\'onsson's conjecture, one can consult \cite{SFL} or \cite{FL}; both have Nation's proof. Instead, a way of interpreting some notions surrounding J\'onsson's conjecture in a new way will be described. To the best of our knowledge, this interpretation is new. One feature that seems to pervade in the interpretation given here and in Nation's proof is the duality result, lemma \ref{duality}. Roughly summarizing, the duality result describes how $C-$cycles respect a structural symmetry, described by the $\kappa$ function, of finite semidistributive lattices. \\

Here is a simplified summary. How this duality result is used in Nation's proof will be described, from an overview in \cite{FL} of Nation's proof. In constructing a contradiction, Nation supposed that in some $S-$lattice there was a minimal $C-$cycle, where if any term in that cycle was removed, the remaining terms would no longer form a cycle. He made a delicate argument and combined it with the duality result, lemma \ref{duality}, to show the following. The only way to avoid a cycle of inequalities ${p_0}_{**} \geq {p_1}_{**} \geq \dots \geq {p_n}_{**} \geq {p_0}_{**}$, where $p_k$ is a join irreducible element and one of those inequalities is strict, is to assume that the cycle is of the form $p_0 B p_1 A p_2 B p_3 \dots A p_n = p_0$. Using separate arguments, he proved that this is also impossible. \\

We note that the below interpretation is inspired by Rival and Sands' 1978 paper, \cite{PSL}, where they proved J\'onsson's conjecture for finite lattices that are planar. A finite lattice is \emph{planar} if its Hasse diagram can be drawn in $\mathbb{R}^2$ in such a way that no two of its drawn line segments intersect. They use machinery from Gaskill and others, in their study of sharply transferable lattices. In particular, the goal was to show that cycles of minimal pairs do exist. Planarity seems to eliminate a lot of unwanted cases, as it allowed them to show that if such cycles exist, the minimal pairs have to be of the form $\langle p ; \{x, y\} \rangle$ (Figure 3 (a) of \cite{PSL}. In the below they allow for the possibility that $p = x \vee y$. \\

\begin{center}
	\begin{tikzpicture} [scale=1]
	
	\node (join) at (0,2) {$x \vee y$};
	
	\node (p) at (-1,1) {$p$};
	\node (x) at (-1,0) {$x$};
	
	\node (y) at (1,0) {$y$};
	
	\draw (x) -- (p) -- (join) -- (y);
	
	\end{tikzpicture}
\end{center}

Such a claim could be made since the $S-$lattices they were looking at were, as they were planar, simplified enough. In fact such a simplification, as described in \cite{PSL}, could also be made with $S-$lattices with a breadth of two. We first describe $C-$cycles in a slightly different way using minimal pairs. This can be deduced from Gaskill, Gr\"atzer, and Platt's perspective of J\'onsson's conjecture (see \cite{STL}) or even by viewing $C-$cycles as a special form of the join-dependency relation. \\

Consider a finite semidistributive lattice $L$ with associated function $\kappa$. In particular, assume that the $p_k$ are join irreducible. Have the left diagram denote $p_k A p_{k+1}$ and the right diagram denote $p_k B p_{k+1}$:

\begin{center}
	\begin{tikzpicture} [scale=1.5]
	
	\node (1) at (0,1) {$p_{k+1} + \kappa(p_{k+1})$};
	\node (a) at (-1.5,0) {$p_k$};
	\node (b) at (-1.5,-1) {$p_{k+1}$};
	\node (x) at (1,-1) {$\kappa(p_{k+1})$};
	
	\draw (b) -- (a) -- (1) -- (x);
	
	\end{tikzpicture}
	\begin{tikzpicture} [scale=1.5]
	
	\node (1) at (0,1) {$(p_k)_* + p_{k+1}$};
	\node (a) at (-1.5,0) {$p_k$};
	\node (x) at (-1.5,-1) {$(p_k)_*$};
	\node (b) at (1,-1) {$p_{k+1}$};
	
	\draw (x) -- (a) -- (1) -- (b);
	
	\end{tikzpicture}
\end{center}

In the above left diagram, $p_k \nleq \kappa(p_{k+1})$ since $p_{k+1} \nleq \kappa(p_{k+1})$. In the above right diagram, $p_k \nleq p_{k+1}$ because $p_k \leq (p_{k+1})_* < p_{k+1}$ is impossible. Due to figure \ref{kappa} and how the $B$ relation is defined, $x + \kappa(p_k+1) \ngeq p_k$ for all $x < p_{k+1}$ and $y + (p_k)_* \ngeq p_k$ for all $y < p_{k+1}$. This can be more concisely written as $p_{k+1} D p_k$. \\

The minimal pair interpretation is as follows. A join cover, $J_k$, of $p_k$ that would make $\langle p_k ; J_k \rangle$ a minimal pair would be: \\

$J_k = \{p_{k+1} \} \cup \{$ canonical join representation of $\kappa(p_k) \}$ when $p_k A p_{k+1}$ \\

$J_k = \{p_{k+1} \} \cup \{$ canonical join representation of $(p_k)_* \} $ when $p_k B p_{k+1}$. \\

So we have $\langle p_k, J_k \rangle$ representing $p_k C p_{k+1}$. Say that $\langle p_k, J_k \rangle$, with $p_{k+1} \in J_k$, is of the form $(A)$ if it represents $p_k A p_{k+1}$ and similarly if $\langle p_k, J_k \rangle$ is of the form $(B)$. We will call such cycles \emph{special} cycles of minimal pairs. As opposed to presenting $C-$cycles as a collection of special cases (see \cite{FL} or \cite{VL}), this approach has the advantage that one can see where the $A$ and the $B$ components of the $C$ relation come from more clearly. In particular, it seems to emphasize the importance of the bijection $\kappa : J(L) \to M(L)$ and the corresponding duality between $J(L)$ and $M(L)$. \\

In the literature, this relation is sometimes broken into several cases; sometimes depending on the version, of the $C$ relation, used. As evidenced by Nation's proof of J\'onsson's conjecture, this has the advantage of analyzing $C-$cycles at a deeper level. For instance, one can consult \cite{VL} or \cite{FL}. Since our emphasis is different, we will not present these cases. \\

As stated in \cite{FL}, research preceding Nation's proof of J\'onsson's conjecture indicated that if $C-$cycles exist in an $S-$lattice they cannot be too short or too simple. For instance, in \cite{ARJN} J\'onsson and Nation prove that such cycles have at least five elements (or five minimal pairs depending on how one views this)

\begin{theorem}[J\'onsson and Nation. \cite{ARJN}, Theorem 8.1] \label{sampling possibilities}
	Any cycle $p_0 C p_1 C \dots C p_n C p_0$ in an $S-$lattice contains at least two $A$'s and three $B$'s
\end{theorem}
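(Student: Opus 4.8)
The plan is to run a contradiction argument entirely inside the cycle of inequalities that the duality result, Lemma \ref{duality}, forces on a $C$-cycle. Suppose $p_0 C p_1 C \dots C p_n = p_0$ is a cycle in an $S$-lattice $L$, and label each step as being of type $(A)$ or type $(B)$ according to whether $p_k A p_{k+1}$ or $p_k B p_{k+1}$. First I would record the two local facts that are visible from Figure \ref{kappa} and Corollary \ref{sym form}: an $(A)$-step $p_k A p_{k+1}$ gives $p_{k+1} < p_k \leq \kappa(p_{k+1})^*$, hence $\kappa(p_k) \geq \kappa(p_{k+1})$ is false and in fact $\kappa(p_k) \leq \kappa(p_{k+1})$ with a strict drop available at the meet-irreducible level; dually a $(B)$-step $p_k B p_{k+1}$ gives ${(p_k)}_* \leq \kappa(p_k) < \kappa(p_{k+1})$, together with $p_k \leq p_{k+1}$ being false. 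Translating the whole cycle through $\kappa$ via Lemma \ref{duality} turns it into a $C^d$-cycle on $M(L)$ in which $(A)$ and $(B)$ steps swap roles, so any lower bound on the number of $A$'s in a generic cycle will, by this symmetry, simultaneously be a lower bound on the number of $B$'s in the dual cycle; this is the structural reason the counts of $A$'s and $B$'s are not independent.

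Next I would extract the monotone-type chain that the excerpt already alludes to. Along the cycle, reading off ${(p_k)}_{**}$ (or, working on the dual side, the $\kappa(p_k)$'s), each step produces an inequality in one fixed direction, and the cycle closes up, so the product of all the steps must be the identity; hence the "strictly decreasing" contributions coming from certain step-types must be cancelled by "strictly increasing" contributions from the complementary step-types. Concretely I expect that a block of consecutive $(A)$-steps forces $\kappa(p_k)$ to strictly increase while a block of consecutive $(B)$-steps forces it to strictly decrease (or vice versa with ${(p_k)}_{**}$), so neither letter can be entirely absent: a cycle with no $A$ or no $B$ immediately yields a strictly monotone cycle in $L$, impossible since $L$ is finite. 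That disposes of the easy lower bound of one $A$ and one $B$. To push to "at least two $A$'s and three $B$'s" one has to rule out the short patterns $ABB$, $ABBB$, $AAB\ldots$ with too few $B$'s, and the length-$3$ and length-$4$ cases outright — here Whitman's condition (W) enters. The idea is that a very short alternating pattern would force one of the minimal-pair join covers $J_k = \{p_{k+1}\} \cup (\text{canonical join rep of } \kappa(p_k) \text{ or } {(p_k)}_*)$ to refine through an element that (W) forbids, because the join cover would be "below a meet" in the sense required to apply (W); concretely one gets some $u \leq v \wedge w$ with $u$ join-irreducible, $v,w$ elements built from the $p_i$'s, and no single $p_i$ or meetand sits between them, contradicting (W).

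The main obstacle will be the bookkeeping that shows precisely which short cyclic words in $\{A,B\}$ are eliminated by the monotonicity argument alone and which genuinely require invoking (W), and getting the constants right — in particular, distinguishing why two $A$'s suffice but one does not, and why three $B$'s are forced rather than two. I would handle this by a careful case analysis organized by the cyclic pattern of the word: first use the $\kappa$-monotonicity (Corollary \ref{sym form}) to collapse consecutive same-letter runs and reduce to a small finite list of candidate reduced words; then for each remaining short word, exhibit the explicit element to which Whitman's condition is applied, using the minimal-pair description of the $C$-steps given above so that the relevant joins and meets are written down directly in terms of $p_i$, $\kappa(p_i)$, and ${(p_i)}_*$. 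I would also double-check the base-cases $n \le 4$ separately, since for very short cycles the run-collapsing step has little room to operate and the Whitman obstruction has to be produced by hand. The duality of Lemma \ref{duality} should be used throughout to halve the work: any argument eliminating a word $w$ automatically eliminates its $A \leftrightarrow B$ swap by passing to the dual lattice $L^d$, which is again an $S$-lattice.
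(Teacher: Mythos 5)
First, a point of orientation: the paper does not prove this theorem. It is imported verbatim from J\'onsson and Nation's 1975 report (\cite{ARJN}, Theorem 8.1), and the only thing the paper proves itself is the far weaker special case that no $C$-cycle of length two exists, via the minimal-pair interpretation. So your proposal is attempting a result whose proof is a substantial piece of \cite{ARJN}, and the sketch as written does not close that distance. There are two concrete problems. The first is that your local monotonicity claim for $(A)$-steps is false: from $p_k A p_{k+1}$, i.e.\ $p_{k+1} < p_k \leq \kappa(p_{k+1})^*$ (Corollary \ref{sym form}), you cannot conclude $\kappa(p_k) \leq \kappa(p_{k+1})$. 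The paper explicitly warns that $\kappa$ is not order-preserving, and already in $N_5$ (with $0 < a < b < 1$ and $0 < c < 1$) one has $b A a$ while $\kappa(b) = a$ and $\kappa(a) = c$ are incomparable. What the two step types actually give is a descent $p_{k+1} < p_k$ in $J(L)$ for an $(A)$-step and an ascent $\kappa(p_k) < \kappa(p_{k+1})$ in $M(L)$ for a $(B)$-step; these live in different posets and do not concatenate into a single monotone chain, so your ``collapse consecutive same-letter runs'' reduction does not get off the ground. The only thing that falls out immediately is that a cycle cannot consist entirely of $(A)$-steps nor entirely of $(B)$-steps, i.e.\ at least one of each.

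The second and larger gap is that everything beyond ``at least one $A$ and one $B$'' --- which is the entire content of the theorem --- is deferred to an unspecified case analysis in which Whitman's condition is invoked on an element $u \leq v \wedge w$ that is never exhibited. Ruling out cycles with exactly one $A$, or with only two $B$'s, is precisely where J\'onsson and Nation's argument does its work (and where, in Nation's later proof of the full conjecture, the ``delicate argument'' producing the chain ${p_0}_{**} \geq {p_1}_{**} \geq \dots \geq {p_0}_{**}$ with a strict inequality appears); no executable version of it is present in your sketch. Your duality observation is sound as far as it goes --- a cycle with $j$ $A$'s and $k$ $B$'s in $L$ yields a cycle with $j$ $B$'s and $k$ $A$'s in the dual $S$-lattice, which is the content of Lemma \ref{duality} --- but note that it cannot by itself account for the asymmetry of the conclusion: deriving ``three $B$'s'' from ``two $A$'s'' by duality would only ever give ``two $B$'s,'' so the bound of three requires a separate argument that the proposal does not supply. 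As it stands, the proposal proves only the easy fact that both letters occur, and should either be restricted to that claim or completed with the explicit (W)-obstructions for each short cyclic word.
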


We investigate a bit of this using our new perspective. The below special case of the above theorem will be proven using special minimal pairs. \\

\begin{lemma}[See \cite{ARJN} et.al]
	A finite semidistributive lattice $L$ does not contain any $C-$cycles of length two.
\end{lemma}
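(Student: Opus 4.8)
The plan is to unwind the definition directly. A $C$-cycle of length two is a pair of join irreducible elements $p_0,p_1\in J(L)$ with $p_0\,C\,p_1\,C\,p_0$. Since $C=A\cup B$, and since neither $p\,A\,p$ nor $p\,B\,p$ can hold (by Corollary \ref{sym form} the first would force $p<p$ and the second $\kappa(p)<\kappa(p)$), we automatically have $p_0\neq p_1$, and there are exactly four patterns to rule out: $AA$, $BB$, $AB$, and $BA$. I would handle each using the symmetric descriptions in Corollary \ref{sym form} together with the basic facts about $\kappa$ recorded in the introduction: $\kappa\colon J(L)\to M(L)$ is a bijection, $p\vee\kappa(p)=\kappa(p)^*$, $p\wedge\kappa(p)=p_*$, and $p\nleq\kappa(p)$.

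The two uniform cases are immediate and I would dispose of them first. If $p_0\,A\,p_1$ and $p_1\,A\,p_0$, then Corollary \ref{sym form} gives $p_1<p_0$ and $p_0<p_1$, absurd. If $p_0\,B\,p_1$ and $p_1\,B\,p_0$, then it gives $\kappa(p_0)<\kappa(p_1)$ and $\kappa(p_1)<\kappa(p_0)$, again absurd.

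The real content is the mixed case, and this is the step I expect to carry the argument. By relabelling $p_0$ and $p_1$ (or by appealing to Lemma \ref{duality}, which interchanges the patterns $AB$ and $BA$) it suffices to treat $p_0\,A\,p_1$ together with $p_1\,B\,p_0$. From $p_0\,A\,p_1$ we get $p_1<p_0\leq\kappa(p_1)^*$; since $p_1\nleq\kappa(p_1)$ and $p_1\leq p_0$, also $p_0\nleq\kappa(p_1)$, so $p_0\vee\kappa(p_1)$ lies strictly above $\kappa(p_1)$ but below $\kappa(p_1)^*$, and because $\kappa(p_1)$ is meet irreducible its only upper cover is $\kappa(p_1)^*$, whence $p_0\vee\kappa(p_1)=\kappa(p_1)^*$. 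From $p_1\,B\,p_0$ we get $\kappa(p_1)<\kappa(p_0)$, and since $L$ is finite and $\kappa(p_1)^*$ is the unique upper cover of the meet irreducible element $\kappa(p_1)$, this forces $\kappa(p_1)^*\leq\kappa(p_0)$. Combining, $p_0\leq p_0\vee\kappa(p_1)=\kappa(p_1)^*\leq\kappa(p_0)$, contradicting $p_0\nleq\kappa(p_0)$. That settles all four cases.

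I expect the only subtlety to be the bookkeeping in the mixed case: one must use that $\kappa(p_1)$ is meet irreducible (so that $\kappa(p_1)^*$ really is its unique upper cover, needed both to identify $p_0\vee\kappa(p_1)$ and to pass from $\kappa(p_1)<\kappa(p_0)$ to $\kappa(p_1)^*\leq\kappa(p_0)$) and to invoke finiteness of $L$ at that last step; everything else is a direct reading of Corollary \ref{sym form}. As a sanity check, the proof uses the $A$-hypothesis only to place $p_0$ below $\kappa(p_1)^*$ and the $B$-hypothesis only to place $\kappa(p_1)$ strictly below $\kappa(p_0)$, so it actually shows a bit more: no $p_0,p_1\in J(L)$ can simultaneously satisfy $p_1<p_0\leq\kappa(p_1)^*$ and $\kappa(p_1)<\kappa(p_0)$.
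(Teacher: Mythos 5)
Your proof is correct, but it takes a genuinely different route from the one in the paper. The paper's stated purpose for this lemma is to illustrate the minimal-pair machinery: its mixed case is dispatched in one line from the fact that $p_2\,B\,p_1$ forces $p_2\nleq p_1$ while $p_1\,A\,p_2$ forces $p_2<p_1$, and the whole weight falls on the $BB$ case, which is handled by showing $p_1+(p_2)_*=(p_1)_*+p_2$, applying $(SD_\vee)$ to produce the refinement $\{(p_1)_*,(p_2)_*\}$ of both join covers, and contradicting minimality of the associated minimal pairs. You instead work entirely from Corollary \ref{sym form} and the basic properties of $\kappa$: your $BB$ case collapses to the immediate absurdity $\kappa(p_0)<\kappa(p_1)<\kappa(p_0)$, which is far shorter than the paper's semidistributivity argument, while your mixed case is longer than the paper's, since you do not use that $a\,B\,b$ entails $a\nleq b$ (the paper never states the definitions of $A$ and $B$ explicitly, so working only from the corollary is a defensible choice) and instead run a covering argument at the meet irreducible $\kappa(p_1)$ to get $p_0\leq\kappa(p_1)^*\leq\kappa(p_0)$, contradicting $p_0\nleq\kappa(p_0)$. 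That argument is sound --- the detour through $p_0\vee\kappa(p_1)=\kappa(p_1)^*$ is redundant since the corollary already gives $p_0\leq\kappa(p_1)^*$ directly, but it is not wrong. What each approach buys: yours shows the lemma is essentially a formal consequence of the $\kappa$-duality and is cleaner as a standalone proof; the paper's, though clumsier in the $BB$ case, exercises the special-minimal-pair interpretation that the surrounding section (and Theorem \ref{fruitful}) is built on, which is the reason the lemma is included at all.
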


\begin{proof}
	This will be done in two cases. Let $p_1 C p_2$ and $p_2 C p_1$.
	
	Case $p_1 A p_2$ and $p_2 C p_1$: If $p_2 A p_1$ then $p_1 < p_2$. And if $p_2 B p_1$ then, by considering the minimal pair associated with $p_2 B p_1$, we see that $p_2 \nleq p_1$. But as $p_1 A p_2$, $p_2 < p_1$ which is impossible by the above.
	
	Case $p_1 B p_2$ and $p_2 B p_1$: Then, since $p_1 \nleq p_2$ and $p_2 \nleq p_1$, $p_1 \parallel p_2$. As $p_1, p_2 \in J(L)$ we have $p_1(p_2)_* = (p_1)_*(p_2) = p_1p_2$. Now, let $\{ i,j \} = \{ 1,2 \}$. Then $p_i + (p_j)_* \leq ((p_i)_* + p_j) + (p_j)_* = (p_i)_* + p_j$, showing that $q = p_1 + (p_2)_* = (p_1)_* + p_2$ for some $q \in L$. But then by $(SD_\vee)$, $(p_1)_* + (p_2)_* = (p_1)_* + (p_2)_* + p_1p_2 = \bigvee \{ ab : a \in \{p_1, (p_2)_* \}, b \in \{(p_1)_*, p_2 \} \} = q$. However, $\{ (p_1)_*, (p_2)_*\}$ is also a proper refinement of both $\{p_1, (p_2)_*\}$ and $\{ (p_1)_*, p_2 \}$; but that is impossible by our minimal pair interpretation.
\end{proof}

We now combine the above with J.B. Nation's duality result, lemma \ref{duality}. To the best of my knowledge the following result I made is not stated elsewhere in the literature. Perhaps some of the results proven by Nation and others can be used here to improve the bound, $n/2$, to $n/a$ for some integer $a > 2$. Below, addition of indices is done modulo $n$: so $J_{k+1} = J_{k'}$ where $1 \leq k' \leq n$ and $k' \cong k+1$ ($\mod$ $n$).

\begin{theorem} \label{fruitful}
	The following are equivalent:
	
	(1) J\'onsson's Conjecture
	
	(2) An S-lattice cannot contain a cycle of minimal pairs $\{ \langle p_k ; J_k \rangle : k = 1,2,\dots,n \}$ with $n \geq 3$ such that $| \{ 1 \leq k \leq n : \{p_k \} \ll J_{k+1} \} | \leq n/2$.
\end{theorem}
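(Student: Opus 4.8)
The plan is to prove the equivalence by showing that condition (2) is essentially a restatement of the non-existence of $C$-cycles in $S$-lattices, which by Nation's theorem is equivalent to J\'onsson's conjecture. First I would recall the standard fact, stated in \cite{FL}, that J\'onsson's conjecture is equivalent to the assertion that no $S$-lattice contains a $C$-cycle $p_0 \mathrel{C} p_1 \mathrel{C} \dots \mathrel{C} p_n = p_0$. Given this, it suffices to show that an $S$-lattice contains a $C$-cycle if and only if it contains a cycle of minimal pairs $\{\langle p_k ; J_k\rangle : k = 1,\dots,n\}$ with $n \ge 3$ satisfying the counting condition $|\{1 \le k \le n : \{p_k\} \ll J_{k+1}\}| \le n/2$, where by the special-cycle correspondence set up earlier in the paper, the index $k$ contributes to this set precisely when $\langle p_k ; J_k\rangle$ is of the form $(A)$ (since $p_k \mathrel{A} p_{k+1}$ forces $p_k \le p_{k+1} + \kappa(p_{k+1})$, hence $\{p_k\} \ll J_k$ with $p_{k+1} \in J_k$, and conversely the $(B)$ case gives $p_k \le (p_k)_* + p_{k+1}$ which is not a proper join cover refinement in the required sense).

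The forward direction is immediate: by the correspondence between $C$-cycles and special cycles of minimal pairs described in the previous section, a $C$-cycle of length $n$ yields such a cycle of minimal pairs, and the length is $n \ge 3$ because the preceding lemma rules out $C$-cycles of length two (and length-one cycles are trivially excluded since $p \mathrel{C} p$ is impossible for $p \in J(L)$); the counting condition then follows from Theorem~\ref{sampling possibilities}, which guarantees at least two $A$'s and three $B$'s, so among the $n$ steps at least three are of form $(B)$, whence the number of $(A)$-steps is at most $n - 3 \le n/2$ once $n \ge 6$ — and for the remaining small cases $n \in \{3,4,5\}$ one checks directly using Theorem~\ref{sampling possibilities} that two $A$'s against three $B$'s already forces $n \ge 5$ and the count of $A$'s is $2 \le n/2$. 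The key technical point I would make explicit here is that the quantity $|\{k : \{p_k\} \ll J_{k+1}\}|$ is exactly the number of indices $k$ at which the cycle has an $(A)$-type minimal pair, which I would justify by unwinding the definitions of $J_k$ in the two cases $(A)$ and $(B)$ and observing when $\{p_k\}$ refines $J_{k+1}$ in the minimal-pair sense.

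For the reverse direction, I would argue the contrapositive: if some $S$-lattice contains a cycle of minimal pairs with $n \ge 3$ and the counting bound, then in particular it contains a cycle of special minimal pairs, which translates back into a $C$-cycle $p_1 \mathrel{C} p_2 \mathrel{C} \dots \mathrel{C} p_n \mathrel{C} p_1$ in that $S$-lattice; hence that $S$-lattice is not a sublattice of a free lattice, so J\'onsson's conjecture fails. Here I should be careful that an arbitrary cycle of minimal pairs need not a priori be a cycle of \emph{special} minimal pairs, so the genuine content is to show that in an $S$-lattice any minimal pair $\langle p_k ; J_k\rangle$ appearing in such a cycle, with its distinguished successor element $p_{k+1} \in J_k$, must in fact have the special form dictated by either $p_k \mathrel{A} p_{k+1}$ or $p_k \mathrel{B} p_{k+1}$ — this uses semidistributivity to pin down $\bigvee(J_k \setminus \{p_{k+1}\})$ as either $\kappa(p_k)$ or $(p_k)_*$, via the uniqueness of canonical join representations in a finite semidistributive lattice together with Figure~\ref{kappa}.

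The main obstacle I anticipate is precisely this last point: making rigorous that the counting condition $|\{k : \{p_k\} \ll J_{k+1}\}| \le n/2$ is the right bookkeeping device, i.e. that it simultaneously (a) is automatically satisfied by every genuine $C$-cycle because of the $2A$/$3B$ lower bound of Theorem~\ref{sampling possibilities}, and (b) does not accidentally admit spurious cycles of minimal pairs that fail to be special. Part (a) reduces to the arithmetic observation that with $\#A \ge 2$, $\#B \ge 3$, and $\#A + \#B = n$, one has $\#A = n - \#B \le n - 3$, which is $\le n/2$ exactly when $n \ge 6$, together with a hand-check of $n = 5$ (forced minimum); the subtlety is confirming that $n=3,4$ simply cannot occur, which again is Theorem~\ref{sampling possibilities}. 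Part (b) is where semidistributivity and Whitman's condition do the real work, and I would lean on the canonical-join-representation machinery from \cite{STL} and \cite{FL} to close the gap; I expect this to be the most delicate paragraph of the proof.
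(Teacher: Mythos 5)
There is a genuine gap, and it lies exactly where the paper's proof does its real work. Your plan for the direction $(2)\Rightarrow(1)$ is to show that \emph{every} $C$-cycle yields a cycle of minimal pairs with $|\{k : \{p_k\}\ll J_{k+1}\}|\le n/2$, deducing the bound from Theorem~\ref{sampling possibilities}. Two things go wrong. First, your identification of the counted indices with the $(A)$-type steps is backwards: if $\langle p_k;J_k\rangle$ is of form $(A)$ then $p_{k+1}<p_k$, so $\{p_k\}\ll J_{k+1}$ would force $\{p_{k+1}\}\ll J_{k+1}$, contradicting the minimality of $\langle p_{k+1};J_{k+1}\rangle$; hence $(A)$-indices can \emph{never} lie in the counted set, and what one gets is only the inclusion of the counted set into the $(B)$-indices, not an equality with the $(A)$-indices. (Your justification also conflates $J_k$ with $J_{k+1}$ and reads $p_k\le\bigvee J_k$ as a refinement, which $\ll$ is not.) Second, even granting your identification, the arithmetic fails: from $\#A\ge 2$, $\#B\ge 3$, $\#A+\#B=n$ one gets $\#A\le n-3$, and $n-3\le n/2$ holds only for $n\le 6$, the opposite of what you wrote. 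So Theorem~\ref{sampling possibilities} cannot certify the bound $\le n/2$ for long cycles, and your forward implication collapses.

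The paper's proof avoids this entirely. The direction $(1)\Rightarrow(2)$ is nearly trivial: Jónsson's conjecture makes every $S$-lattice a finite sublattice of a free lattice, which (by the Gaskill--Grätzer--Platt characterization) has \emph{no} cycles of minimal pairs at all, so in particular none satisfying the counting condition. For $(2)\Rightarrow(1)$ the paper argues by contradiction: a putative $C$-cycle gives a special cycle with $n\ge 3$ (length two being excluded by the earlier lemma), condition $(2)$ forces the counted set to have size $>n/2$, the observation above forces all those indices to be of form $(B)$, and then --- this is the step you are missing --- Lemma~\ref{duality} is applied: $\kappa$ transports the cycle to a dual special cycle in which those $(B)$'s become $(A^d)$'s, so more than $n/2$ of the dual pairs are of form $(A^d)$, fewer than $n/2$ of form $(B^d)$, and the self-dual condition $(3)$ is violated on the dual side. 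Your proposal never invokes $\kappa$ or Lemma~\ref{duality}, which is the engine of the argument; without it, the counting condition $\le n/2$ is not something you can verify for an arbitrary $C$-cycle, and the equivalence does not close.
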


\begin{proof}
	
	If $(1)$ is true, assume that $L$ is an $S-$lattice. By (cite references) $L$ is a finite sublattice of a free lattice if and only if it has no cycle of minimal pairs. J\'onsson's conjecture implies that $L$ is a sublattice of a free lattice because $L$ is an $S-$lattice, and so $L$ satisfies $(2)$. \\
	
	Conversely, we will show that the following implies J\'onsson's conjecture: \\
	
	(3) An S-lattice cannot contain a cycle of \emph{special} minimal pairs $\{ \langle p_k ; J_k \rangle : k = 1,2,\dots,n \}$ with $n \geq 3$ such that (addition being done modulo $n$) $| \{ 1 \leq k \leq n : \{p_k \} \ll J_{k+1} \} | \leq n/2$. \\
	
	Assume that $L$ is an $S-$lattice. Then if $L$ has a cycle it must be a cycle of two special minimal pairs, each being of the form $(A)$ or $(B)$. Suppose not, then by J\'onsson and Nation's work there is a special cycle of minimal pairs $C_n = \{ \langle p_i ; J_i \rangle : i = 1,2, \dots, n \}$ with $n \geq 3$. \\
	
	Define an unary relation $U_n$ on $\{p_i : i = 1,2, \dots, n \}$ by $p_i \in U_n$ iff $\{p_i \} \ll J_{i+1}$. Then by $(2)$, we have that $|U_n| > n/2$. If $p_k \in U_n$ and $\langle p_k ; J_k \rangle$ is of the form $(A)$, then as $p_{k+1} < p_k$ and $\{p_k\} \ll J_{k+1}$, $\{p_{k+1}\} \ll J_{k+1}$ which is impossible. \\
	
	So that possibility cannot happen; hence $U_n$ consists of elements $p_i$ where $\langle p_i ; J_i \rangle$ is of the form $(B)$. This is where we use the $\kappa$ function. We first consider the elements: $\kappa(p_1), \kappa(p_2), \dots \kappa(p_n)$. We set $M_i = \{p_{i+1}\} \cup M_i'$ where $M_i'$ is the canonical meet representation of $\kappa(p_i)$ if $\kappa(p_i) A^d \kappa(p_{i+1})$ and $M_i'$ is the canonical meet representation of $(p_i)^*$ if $\kappa(p_i) B^d \kappa(p_{i+1})$. This gives us a cycle of ``dual'' special minimal pairs $C_n^d = \{ \langle \kappa(p_i), M_i \rangle \}$.  Even though the $\kappa$ function is a bijection, it does not even have to be order preserving. An example is the pentagon, $N_5$. This lattice is semidistributive and it satisfies Whitman's condition. However, $\kappa : J(N_5) \to M(N_5)$ is a cyclic permutation on $N_5 \backslash \{0,1\}$; $0$ and $1$ being the bottom and top elements of $N_5$ respectively. So we will use J.B.Nation's duality result, lemma \ref{duality}. \\
	
	We first note that condition $(3)$ is self-dual in the following sense. The join semidistributive laws are the dual of the meet semidistributive laws, and Whitman's condition is a self dual property. So if $(3)$ is true, then condition $(3)$ with \emph{dual special minimal pairs} replacing \emph{special minimals pairs} is also true. \\
	
	So with $C_n^d$, one can define an unary relation $U_n^d$ on $\{\kappa(p_i) : i = 1,2, \dots, n \}$ defined by $\kappa(p_i) \in U_n^d$ iff $M_{i+1} \ll \kappa(p_i)$. By lemma \ref{duality}, $\kappa(p_i) B^d \kappa(p_{i+1})$ if $p_i A p_{i+1}$, and $\kappa(p_i) A^d \kappa(p_{i+1})$ if $p_i B p_{i+1}$. Whence, \\
	
	$|\{\kappa(p_i) : \langle \kappa(p_i), M_i \rangle \text{ is of the form } (A^d)  \}| = |\{p_i : \langle p_i, J_i \rangle \text{ is of the form } (B)  \}| > n/2$ \\
	
	But this violates the dual of condition $(3)$. Hence the only possible cycles of $C-$reduced minimal pairs in $L$ have to consist of two $C-$reduced minimal pairs; but as $L$ is semidistributive that is impossible by a lemma earlier described in these notes.
	
\end{proof}

We note the the above theorem is an improvement over Gaskill, Gr\"atzer, and Platt's observation, see \cite{STL}, that J\'onsson's conjecture is equivalent to asserting that an $S-$lattice does not have a cycle of minimal pairs.

\subsubsection{Some examples:}

To see that there are minimal cycles of minimal pairs, with at least three minimal pairs, that are not as specified in theorem \ref{fruitful} do exist one could consider the following. Below I had constructed three lattices. The most desired example is the right most lattice, with the other diagrams denoting intermediate stages of its construction. It is desired as it is ``closest'' to an $S-$lattice; it is a finite lattice satisfying $(W)$. Starting with the leftmost lattice, Alan Day's interval doubling procedure was used until the rightmost lattice resulted.

\begin{fig}
	\begin{center}
		\begin{tikzpicture}
		\node (1) at (-0.5,2.5) {$\circ$};
		\node (tpr) at (1.25,1.75) {$\circ$};
		\node (tpr1) at (1.33,0.83) {$\circ$};
		\node (tpr2) at (1.42,-0.08) {$\circ$};
		\node (z) at (0.5,1.2) {$z$};
		\node (center) at (0,0) {$\circ$};
		\node (a) at (-0.7,-1) {$a$};
		\node (y) at (-0.35,0.75) {$y$};
		\node (b) at (0.5,-1) {$b$};
		\node (x) at (-1.5,-1) {$x$};
		\node (c) at (1.5,-1) {$c$};
		\node (0) at (0,-2) {$\circ$};
		
		\draw (y) -- (1) -- (x) -- (0) -- (c) -- (tpr2) -- (tpr1) -- (tpr) -- (z) -- (b) -- (0) -- (a) -- (center) -- (tpr1);
		\draw (1) -- (y) -- (a);
		\draw (1) -- (tpr);
		\draw (b) -- (center);
		\draw (a) -- (tpr2);
		\end{tikzpicture}
		\begin{tikzpicture}
		\node (1) at (-0.75,2.5) {$\circ$};
		\node (tpr) at (1.25,1.75) {$\circ$};
		\node (tpr1) at (1.33,0.5) {$\circ$};
		\node (tpr1') at (1.33, 1.2) {$\circ$};
		\node (tpr2) at (1.42,-0.08) {$\circ$};
		\node (z) at (0.5,1.2) {$z$};
		\node (center) at (0,-0.3) {$\circ$};
		\node (center') at (0,0.3) {$\circ$};
		\node (a) at (-0.7,-1) {$a$};
		\node (a') at (-0.7,-0.4) {$\circ$};
		\node (y) at (-0.5,0.75) {$y$};
		\node (b) at (0.5,-1) {$b$};
		\node (x) at (-1.5,-1) {$x$};
		\node (c) at (1.5,-1) {$c$};
		\node (0) at (0,-2) {$\circ$};
		
		\draw (y) -- (1) -- (x) -- (0) -- (c) -- (tpr2) -- (tpr1) -- (tpr1') -- (tpr) -- (z) -- (b) -- (0) -- (a) -- (center) -- (tpr1);
		\draw (1) -- (y) -- (a') -- (a);
		\draw (1) -- (tpr);
		\draw (b) -- (center);
		\draw (a) -- (tpr2);
		\draw (center) -- (center');
		\draw (a') -- (center') -- (tpr1');
		\end{tikzpicture}
		\begin{tikzpicture}
		\node (1) at (-1,2.5) {$\circ$};
		\node (tpr) at (1.25,1.9) {$\circ$};
		\node (tpr1) at (1.33,0.5) {$\circ$};
		\node (tpr1') at (1.33, 1.2) {$\circ$};
		\node (tpr2) at (1.42,-0.3) {$\circ$};
		\node (z) at (0.5,1.2) {$z$};
		\node (center) at (0,-0.5) {$\circ$};
		\node (center'') at (0,-0.1) {$\circ$};
		\node (center') at (0,0.6) {$\circ$};
		\node (a) at (-0.7,-1.2) {$a$};
		\node (a') at (-0.7,-0.2) {$\circ$};
		\node (y) at (-0.7,0.75) {$y$};
		\node (b) at (0.5,-1.2) {$b$};
		\node (x) at (-1.5,-1) {$x$};
		\node (c) at (1.5,-1) {$c$};
		\node (0) at (0,-2) {$\circ$};
		
		\draw (y) -- (1) -- (x) -- (0) -- (c) -- (tpr2) -- (tpr1) -- (tpr1') -- (tpr) -- (z) -- (b) -- (0) -- (a) -- (center) -- (center'') -- (tpr1);
		\draw (1) -- (y) -- (a') -- (a);
		\draw (1) -- (tpr);
		\draw (b) -- (center);
		\draw (a) -- (tpr2);
		\draw (center) -- (center'') -- (center');
		\draw (a') -- (center') -- (tpr1');
		\end{tikzpicture}
	\end{center}
\end{fig}

They each have this cycle of three minimal pairs $\langle a ; \{b,c\}  \rangle$, $\langle b ; \{c,a\}  \rangle$, and $\langle c ; \{a,b\}  \rangle$ such that $\{a\} \ll \{y,c\}$, $\{b\} \ll \{z,a\}$, and $\{c\} \nless \nless \{x,b\}$. This example also has the property that $\{a\} \cap \{y,c\} = \{b\} \cap \{z,a\} = \varnothing$.

\section{Some Properties of Finite Semidistributive Lattices}

We look at properties that certain finite semidistributive lattices possess and do two things. See how they can be used to extend this different way of approaching J\'onsson's conjecture. And see limitations of the semidistributive laws in analyzing finite lattices in general. In the process, Rival and Sand's work related to J\'onsson's conjecture is presented.

\subsection{Breadth Two Semidistributive Lattices and Related Properties}

Let $L$ be a lattice. The \emph{breadth}, $b(L)$, of a lattice $L$ is (if it exists) the smallest number $N$ such that the following is true for all $x \in L$: if $S$ is a join representation of $x$, then for some $S' \subseteq S$, with $|S'| = N$, $S'$ is also a join representation of $x$. This definition of breadth is self dual in the sense that the above definition would not change if \textit{join representation} were replaced with \textit{meet representation}. A proof that there is no difference can be found in Rival and Sands' paper \cite{PSL}. \\

We consider partial orders with elements, $\{x_1, x_2, x_3, x_4, \dots x_{2n-1},x_{2n}\}$, known as \emph{crowns of order $2n$} (see \cite{PSL}). Alternatively, they are called \emph{$n-$crowns} (see \cite{HM}). A partial order as specified above is a crown of order $2n$ if its only comparability relations are $x_{2j-1} < x_{2j}$ and $x_{2j} > x_{2j+1}$ for $j = 1, 2, \dots, n$. Arithmetic is done modulo $n$. A crown is defined to be an $n-$crown for some positive integer $n$. See \cite{PSL}, \cite{PSL2}, and \cite{HM} for more on crowns. \\

A finite lattice $L$ is said to be \emph{dismantlable} if there exists a labelling of its elements, $\{a_1, a_2, \dots, a_n\}$, such that $\{a_1\}$, $\{a_1,a_2\}$, $\dots$, $\{a_1,a_2, \dots, a_{n-1}\}$ are all sublattices of $L$. So, see \cite{HM}, with such lattices one can sequentially delete doubly irreducible elements until only one element remains; hence the term. We present the following lemmas from \cite{PSL}: \\

\begin{lemma}[\cite{PSL}, Lemma 2.6] \label{see}
	\label{1}
	\textit{Let $L$ be a finite lattice satisfying $(SD_\wedge)$ and let $n^*$ be the maximum number of elements covering any element of $L$. Then $n^* = b(L)$.}
\end{lemma}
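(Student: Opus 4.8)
The plan is to prove the two inequalities $n^*\le b(L)$ and $b(L)\le n^*$ separately. First I would record the standard reformulation of breadth: since every finite join representation of an element thins to an irredundant one of no larger cardinality, $b(L)$ is the largest cardinality of an irredundant join representation occurring in $L$. So it suffices to show (a) every irredundant join representation in $L$ has at most $n^*$ terms, and (b) some irredundant join representation has exactly $n^*$ terms. I expect only step (b) to use $(SD_\wedge)$.

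For (b) I would take an element $a\in L$ with the maximum number $n^*$ of upper covers $b_1,\dots,b_{n^*}$. Since distinct upper covers of $a$ are incomparable, for $i\ne j$ we have $b_i\wedge b_j<b_i$, and because $b_i$ covers $a$ this forces $b_i\wedge b_j=a$. Thus $b_1\wedge b_2=b_1\wedge b_3=\dots=b_1\wedge b_{n^*}=a$, so $(SD_\wedge)$ gives $b_1\wedge(b_2\vee\dots\vee b_{n^*})=a$, whence $b_1\not\le b_2\vee\dots\vee b_{n^*}$. By symmetry no $b_i$ lies below the join of the remaining $b_j$, so $\{b_1,\dots,b_{n^*}\}$ is an irredundant join representation of $\bigvee_{i}b_i$, as required. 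I would remark that $(SD_\wedge)$ is genuinely needed here: in $M_3$ the three atoms have pairwise meet equal to the bottom but form a redundant representation of the top, and indeed $n^*=3>2=b(M_3)$ there.

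For (a), given an irredundant representation $x=\bigvee_{i=1}^{k}s_i$, I would set $t_i=\bigvee_{j\ne i}s_j$; irredundancy gives $t_i<x$, hence $s_i\not\le t_i$, while $s_i\le t_j$ for every $j\ne i$. Then I would introduce the ``leave-one-out'' meets $a=\bigwedge_{i=1}^{k}t_i$ and $a_i=\bigwedge_{j\ne i}t_j$ for each $i$, so that $a\le a_i$. The argument then rests on three observations: (i) $a<a_i$, since $a=a_i$ would give $a_i\le t_i$ while also $s_i\le a_i$, contradicting $s_i\not\le t_i$; (ii) $a_i\wedge a_j=a$ whenever $i\ne j$, because the two index sets defining $a_i$ and $a_j$ jointly exhaust $\{1,\dots,k\}$; (iii) choosing, by finiteness, an upper cover $c_i$ of $a$ with $a<c_i\le a_i$, the elements $c_1,\dots,c_k$ are pairwise distinct, since $c_i=c_j$ with $i\ne j$ would give $c_i\le a_i\wedge a_j=a$. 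Hence $a$ has at least $k$ upper covers, so $k\le n^*$; combining (a) and (b) yields $b(L)=n^*$.

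The only step I expect to require real thought is the choice of witness in (a): recognising that $a=\bigwedge_i\bigvee_{j\ne i}s_j$ is the element whose upper covers record the width of the representation, and that the identity $a_i\wedge a_j=a$ is exactly what keeps the chosen covers $c_i$ distinct. Everything else — the reduction to irredundant representations, the short interval arguments, and the one invocation of $(SD_\wedge)$ — should be routine, though I would take a moment over the degenerate cases (where $k\le 1$, or $L$ is a one-element lattice or a chain) and over the precise reading of the convention in the definition of breadth.
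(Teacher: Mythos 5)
Your argument is correct, but note that the paper does not prove this lemma at all: it is quoted verbatim from Rival and Sands (Lemma 2.6 of \cite{PSL}), so there is no in-paper proof to compare against. Judged on its own, your two-inequality strategy is sound and self-contained. The reduction of $b(L)$ to the maximal size of an irredundant join representation is the right reformulation for finite lattices (modulo the paper's slightly awkward convention that $|S'|$ equals $N$ exactly rather than being at most $N$). In direction (b), the observation that distinct upper covers $b_i, b_j$ of $a$ satisfy $b_i\wedge b_j=a$, followed by the iterated form of $(SD_\wedge)$ (which does require a one-line induction from the binary law, worth stating), correctly yields an irredundant representation of size $n^*$; the $M_3$ example is exactly the right witness that $(SD_\wedge)$ cannot be dropped here. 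In direction (a), the leave-one-out elements $t_i$, $a_i$ and the identity $a_i\wedge a_j=a$ for $i\neq j$ do produce $k$ distinct upper covers of $a=\bigwedge_i t_i$, and this half genuinely uses no semidistributivity --- which is consistent, since $b(L)\le n^*$ holds in $M_3$ too. The only points to tighten are the degenerate ones you already flag: the construction of $a_i$ presumes $k\ge 2$ (for $k\le 1$ the bound is immediate once $L$ has more than one element), and the chain and one-element cases of (b) should be dispatched separately. Since the paper leans on this lemma (in Lemma \ref{4} and in Proposition \ref{sparse}) without proof, a写-out of your argument would be a reasonable addition rather than a redundancy.
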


The above lemma says that for finite \emph{semidistributive} lattices, one can ``see'' the breadth of the structure by looking at its covering relation. Because the definition of breadth is self-dual, we see that a finite semidistributive lattice $L$ has breadth $N = b(L)$ if and only if any element $x$ in $L$ covers at most $N$ elements and is covered by at most $N$ elements. The following characterization of dismantlable lattices was proven by D. Kelley and I. Rival in 1974. \\

\begin{lemma}[D. Kelly and I. Rival 1974. See \cite{PSL}, Lemma 2.3]
	\label{2}
	\textit{A finite lattice is dismantlable if and only if it contains no crown.}
\end{lemma}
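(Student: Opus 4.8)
The plan is to prove the two implications separately, each by a reduction that isolates one genuinely combinatorial core. Both rest on the elementary fact that, for an element $a$ of a finite lattice $L$, the subset $L \setminus \{a\}$ is a sublattice of $L$ precisely when $a$ is both join irreducible and meet irreducible; the forward direction is immediate, and for the converse one checks that a join, or a meet, of two elements distinct from $a$ cannot equal $a$. Consequently $L$ is dismantlable if and only if it can be reduced to a one-element lattice by successively deleting doubly irreducible elements, each deletion being a passage to a sublattice; and since a crown sits inside a lattice as an \emph{induced} subposet, a crown lying in a sublattice of $L$ is a crown in $L$. Throughout I read ``crown'' as a crown of order $2n$ with $n \geq 3$, which is where the content of the statement lies.

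For the implication \emph{dismantlable $\Rightarrow$ crown-free} I would induct on $|L|$. The case $|L| = 1$ is trivial. Otherwise choose a doubly irreducible $a$ such that $L \setminus \{a\}$ is dismantlable, so that $L \setminus \{a\}$ is crown-free by the inductive hypothesis, and suppose for contradiction that $L$ contains a crown $C$. If $a \notin C$ we are already done, so $a \in C$; since the bottom and the top of $L$ are comparable to every element and hence cannot lie on a crown, $a$ then has a unique lower cover $a_*$ and a unique upper cover $a^*$. In the cyclic order of $C$ the vertex $a$ lies either below both of its neighbours or above both of them; in the first case I would replace $a$ by $a^*$, in the second by $a_*$. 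The verification that the resulting set is again a crown, of the same order and contained in $L \setminus \{a\}$ — the replaced vertex is strictly below, resp.\ above, the two neighbours of $a$ and incomparable to every other vertex of $C$ — is a short but careful piece of index bookkeeping, using that $a^*$ is \emph{the} upper cover of $a$ and using $n \geq 3$ to exclude the index collisions that would otherwise arise. This contradicts crown-freeness of $L \setminus \{a\}$.

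For the converse, \emph{crown-free $\Rightarrow$ dismantlable}, I would argue contrapositively. If $L$ is not dismantlable, then greedily deleting doubly irreducible elements cannot reach a one-element lattice, so the process stops at a sublattice $L'$ with $|L'| \geq 2$ in which no element is doubly irreducible; it then suffices to produce a crown inside $L'$. Because $L'$ has no doubly irreducible element, its bottom has at least two atoms above it, its top at least two coatoms below it, $L'$ has height at least three (in each $M_k$ every middle element is doubly irreducible), so no atom is a coatom, and every element of $L'$ other than the bottom and the top has at least two covers on at least one side. Using these facts one aims to grow an alternating sequence $x_1 < x_2 > x_3 < x_4 > \cdots$ of distinct elements strictly between the bottom and the top of $L'$ and, since $L'$ is finite, to force it to close into an alternating cycle. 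Taking such a cycle of minimal length, one would then show it is chordless — any comparability between two of its vertices other than the cyclic ones should, via a suitable meet or join in $L'$, yield a strictly shorter alternating cycle — and that a small additional lattice argument rules out the four-element degeneracy, so that the minimal cycle is an induced subposet, that is, a crown of order $2n$ with $n \geq 3$.

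I expect this crown-extraction to be the main obstacle. Two points are delicate. First, growing the alternating sequence far enough to close up: the obvious greedy step can stall at an endpoint that is irreducible on the side one wishes to continue, so one must re-route, and here the lattice structure — not merely the cover conditions derived above — is genuinely used, since the cover conditions alone do not force an $n \geq 3$ crown in a general poset, as the two-crown already shows. Second, the chord-elimination that upgrades a closed alternating walk to an induced crown, and the exclusion of the case $n = 2$, again lean on meets and joins in $L'$. By comparison, the replacement lemma of the forward direction and the two reductions to the core statements are routine.
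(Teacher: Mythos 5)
The paper offers no proof of this lemma to compare against: it is imported verbatim from Kelly and Rival (1974) via Lemma~2.3 of \cite{PSL}. Judged on its own, your forward direction (dismantlable $\Rightarrow$ crown-free) is essentially complete and correct: the equivalence of dismantlability with successive deletion of doubly irreducible elements, the observation that the extremes of $L$ cannot lie on a crown, and the replacement of a crown vertex $a$ by its unique upper (resp.\ lower) cover all work, and you are right both that ``crown'' must be read as order at least six --- with the paper's literal definition the statement is false, since the four-element crown sits inside the dismantlable lattice obtained by stacking two copies of $2\times 2$ --- and that $n\ge 3$ is exactly what kills the index collisions in the replacement step (an element below $a^*$ other than $a$ would have to lie below both neighbours of $a$ in the crown, forcing $2j+1\equiv 2j-3$).

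The converse, however, has a genuine gap. After the correct reduction to a finite lattice $L'$ with no doubly irreducible element, everything hinges on the claim that such an $L'$ contains a crown of order at least six, and this is only described, not proved. The two steps you defer are precisely where the content of the Kelly--Rival theorem lives. First, the local cover conditions you derive (two atoms, two coatoms, every interior element join- or meet-reducible) do not let you extend an alternating fence: if the fence ends in a valley whose surplus of covers is on the wrong side, nothing produced so far guarantees an element above it that is incomparable to the preceding peak and to the rest of the fence, and the ``re-routing'' you invoke is exactly the missing argument; nor is it explained why a maximal fence must close into a cycle rather than simply terminate. Second, while chord-elimination on a closed alternating walk does reduce to shorter closed walks, it bottoms out at the four-element configuration, and excluding that degeneracy requires a genuine lattice-theoretic argument (via $x_1\vee x_3$ and $x_2\wedge x_4$ and the absence of doubly irreducible elements) that you do not supply. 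As it stands the proposal establishes only one implication; the other would need the fence machinery of Kelly and Rival's original proof to be carried out in full.
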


We also have the following characterization:

\begin{lemma}[\cite{PSL}, Lemma 2.2]
	\label{3}
	\textit{A lattice has breadth at most two if and only if it contains no crown of order six.}
\end{lemma}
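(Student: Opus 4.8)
The result here is Lemma~2.2 of \cite{PSL}; the argument I would give hinges on the following reformulation of ``breadth greater than two'': a lattice $L$ has breadth $> 2$ if and only if there exist $a_1, a_2, a_3 \in L$ whose join $u = a_1 \vee a_2 \vee a_3$ strictly exceeds each of the three pairwise joins, equivalently $a_i \nleq a_j \vee a_k$ for every permutation $\{i,j,k\} = \{1,2,3\}$. One direction of this reformulation is immediate: such a triple is a three-element join representation of $u$ no two-element subset of which represents $u$. For the other direction I would take a join representation $S$ of some element $u$ that is ``bad'' (no two-element subset represents $u$) and of minimum cardinality; then $|S| \geq 3$, and $S$ is forced to be irredundant, since deleting an element that does not drop the join would yield either a strictly shorter bad representation or a two-element representation of $u$ outright. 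Writing $S = \{s_1,\dots,s_n\}$ and collapsing to $a_1 = s_1$, $a_2 = s_2$, $a_3 = \bigvee_{i \geq 3} s_i$, irredundancy of $S$ together with badness gives exactly the three strict inequalities for this triple.

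For the ``only if'' part of the lemma I would prove the contrapositive. Suppose $L$ contains a crown of order six, say $x_1 < x_2 > x_3 < x_4 > x_5 < x_6 > x_1$, and set $a_1 = x_1$, $a_2 = x_3$, $a_3 = x_5$. Since $x_2$ bounds $\{x_1,x_3\}$ we get $a_1 \vee a_2 \leq x_2$, and similarly $a_2 \vee a_3 \leq x_4$ and $a_1 \vee a_3 \leq x_6$. But $a_3 = x_5 \nleq x_2$ (incomparable in the crown), so $u = a_1 \vee a_2 \vee a_3 \nleq x_2$ and hence $a_1 \vee a_2 < u$; the same argument with $x_4$ and $x_6$ handles the other two pairs. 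By the reformulation, $L$ has breadth $> 2$.

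For the ``if'' part I would again argue the contrapositive. Assume $L$ has breadth $> 2$, take $a_1, a_2, a_3$ as in the reformulation, and set $c_1 = a_2 \vee a_3$, $c_2 = a_1 \vee a_3$, $c_3 = a_1 \vee a_2$. The claim is that $\{a_1, a_2, a_3, c_1, c_2, c_3\}$, with the identification $x_1 = a_1$, $x_2 = c_3$, $x_3 = a_2$, $x_4 = c_1$, $x_5 = a_3$, $x_6 = c_2$, is a crown of order six. The six crown comparabilities $a_i < c_j$ for $i \neq j$ hold because $c_j$ is a join of two of the $a$'s including $a_i$, and the inequality is strict because $c_j$ also dominates the remaining $a$, which $a_i$ does not. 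Every remaining pair must be checked incomparable, and this is where the whole strength of the hypothesis $a_i \nleq a_j \vee a_k$ is used: $a_i \parallel a_j$ since $a_i \leq a_j$ would give $a_i \leq a_j \vee a_k$; $a_i \parallel c_i$ since $a_i \leq c_i$ is precisely the forbidden inequality while $c_i \leq a_i$ would force $a_j \leq a_i$; and $c_i \parallel c_j$ since, e.g., $c_1 \leq c_2$ would give $a_2 \leq a_1 \vee a_3 = c_2$, again forbidden. Distinctness of all six elements follows from these incomparabilities.

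The reduction to a minimum-cardinality irredundant representation is routine, and both directions of the lemma then fall out of the reformulation. The one place I expect to have to be meticulous is the last step: verifying that the induced order on the six constructed elements contains no comparability beyond the six crown edges, which means checking each incomparability in both directions. The conceptual point that drives the construction is the symmetry between the ``lower'' triple $a_1, a_2, a_3$ and the ``upper'' triple $c_k = a_i \vee a_j$, which is exactly what assembles the crown.
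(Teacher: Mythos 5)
The paper offers no proof of this lemma: it is imported verbatim from Rival and Sands, so there is nothing internal to compare against. Your argument is correct and self-contained, and it is the standard proof of this classical fact. The reformulation of breadth $>2$ via a triple with $a_i \nleq a_j \vee a_k$ for all $i$ is established correctly (the minimal-cardinality bad representation is indeed irredundant, and the collapse $a_3 = \bigvee_{i\ge 3} s_i$ preserves the three strict inequalities); the crown-to-triple direction correctly exploits that $x_{2j}$ bounds two consecutive lower elements but not the third; and the triple-to-crown direction checks all fifteen pairs among $\{a_1,a_2,a_3,c_1,c_2,c_3\}$, which is exactly the meticulous step you flagged. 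The only caveat worth recording is that the paper's stated definition of breadth demands a representing subset $S'$ with $|S'| = N$ exactly, which degenerates on singleton join representations; your proof tacitly uses the standard reading $|S'| \le N$, which is the intended one and the one under which the lemma is true.
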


Combining the above two lemmas, Rival and Sands proved the following characterization in \cite{PSL} \\

\begin{lemma}[\cite{PSL}, Lemma 2.4]
	\label{4}
	\textit{Let $L$ be a finite lattice satisfying $(SD_\vee)$. Then $L$ is dismantlable if and only if $b(L) \leq 2$.}
\end{lemma}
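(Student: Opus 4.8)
The plan is to prove the two implications separately; only the reverse one uses $(SD_\vee)$. For the forward implication, if $L$ is dismantlable then by Lemma \ref{2} it contains no crown, in particular no crown of order six, so Lemma \ref{3} gives $b(L)\le 2$; this half uses neither semidistributivity nor Lemma \ref{see}. For the converse, suppose $b(L)\le 2$ and $L$ satisfies $(SD_\vee)$. By Lemma \ref{2} it is enough to show $L$ has no crown, so suppose it does and fix a crown of minimal order, with bottoms $a_1,\dots,a_n$ and tops $b_1,\dots,b_n$ (so $a_i<b_i$ and $a_{i+1}<b_i$, indices mod $n$, and these are the only comparabilities among the $2n$ elements). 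By Lemma \ref{3}, $b(L)\le 2$ forbids crowns of order six, so $n\ge 4$.

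The first step is to squeeze out of $b(L)\le 2$ alone the inequalities $a_i\le a_{i-1}\vee a_{i+1}$ for every $i$: since $a_{i-1}\vee a_i\vee a_{i+1}$ is a join of three elements it equals the join of two of them, and the two ``adjacent'' possibilities $a_{i-1}\vee a_i$ and $a_i\vee a_{i+1}$ would each put a third bottom of the crown below a top of the crown, which is impossible because $n\ge 3$. I would also record, applying Lemma \ref{see} to $L^{d}$ (which satisfies $(SD_\wedge)$ and has breadth $b(L^{d})=b(L)\le 2$), that every element of $L$ covers at most two elements; this is available for tightening the crown if needed.

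The goal is then to manufacture from the minimal crown a crown of strictly smaller order -- or one of order six -- contradicting minimality (respectively Lemma \ref{3}). The model move is: delete a bottom $a_2$ together with the two tops $b_1,b_2$ above it and splice in $t=a_1\vee a_3$ as a new top joining $a_1$ and $a_3$. The inequalities above give $a_2\le t$, so $a_2$ may be discarded, and checking comparabilities shows that $\{a_1,a_3,a_4,\dots,a_n\}\cup\{t,b_3,\dots,b_n\}$ is then a crown of order $2(n-1)$ -- provided $t$ lies above none of $a_4,\dots,a_n$. When $t$ overshoots (for $n=4$ this is automatic, since the inequalities then force $a_1\vee a_3=a_2\vee a_4=\bigvee_i a_i$), this fails and $(SD_\vee)$ must enter: writing $d_i=a_i\vee a_{i+1}$ for the lowered tops, the overshoot collapses the joins $d_i\vee d_j$ to a single element $T$, and repeatedly applying the form $x\vee y=x\vee z\ \Rightarrow\ x\vee y=x\vee(y\wedge z)$ of $(SD_\vee)$ to these equalities produces an element that both must and cannot lie below some $d_i$.

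The hard part will be exactly this overshoot case, and above all the base case $n=4$: breadth two by itself does not rule out crowns of order eight, so the contradiction has to be wrung from $(SD_\vee)$ applied to the join-subsemilattice spanned by the crown's tops. I expect the bulk of the work to be the case analysis of when and how $a_1\vee a_3$ (and its cyclic analogues) overshoot, together with the routine but lengthy verification that each spliced configuration carries no spurious comparabilities.
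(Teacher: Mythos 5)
The paper does not actually prove this lemma: it imports it from Rival and Sands \cite{PSL}, observing only that the forward direction follows by chaining Lemma \ref{2} (dismantlable iff no crown) with Lemma \ref{3} (breadth at most two iff no crown of order six). Your forward implication is exactly that chain and is fine. The entire content of the lemma is therefore the converse: that for a finite $(SD_\vee)$ lattice, the absence of $6$-crowns forces the absence of all crowns, and this is where your proposal has a genuine gap rather than a proof.

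Your reduction strategy (take a crown of minimal order $2n$ with $n\ge 4$, derive $a_i\le a_{i-1}\vee a_{i+1}$ from breadth two, splice in $t=a_1\vee a_3$ to shorten the crown) is a reasonable plan, and the derivation of $a_i\le a_{i-1}\vee a_{i+1}$ is correct. But you yourself identify that the splice fails precisely in the ``overshoot'' case, and that for $n=4$ the overshoot is unavoidable: your own inequalities give $a_1\vee a_3=a_2\vee a_4=\bigvee_i a_i$, so no shorter crown can be manufactured and the contradiction must come entirely from $(SD_\vee)$. At that point the proposal only gestures at the argument --- ``repeatedly applying $x\vee y=x\vee z\Rightarrow x\vee y=x\vee(y\wedge z)$ \ldots produces an element that both must and cannot lie below some $d_i$'' --- without exhibiting that element or the two incompatible constraints on it. If one tries to carry this out (set $c_i=a_i\vee a_{i+1}$, note $c_i\vee c_j=\bigvee_k a_k$ for all $i\ne j$, and iterate $(SD_\vee)$ to get $\bigvee_k a_k=c_1\vee(c_2\wedge c_3\wedge c_4)$), one does not land on an immediate contradiction; further use of breadth two and the crown's incomparabilities is needed, and none of that is supplied. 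Since the base case $n=4$ is exactly the case your induction/reduction cannot dispose of, the proof is incomplete at its essential point. To repair it you would either have to work out the $(SD_\vee)$ computation in full for the minimal crown, or do as the paper does and cite Lemma 2.4 of \cite{PSL} for this direction.
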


A similar result to the above lemma, namely the breadth of a finite semidistributive dismantlable lattice does not exceed two, was obtained in H. M\"uhle's manuscript \cite{HM}. However, this manuscript has a serious fallacy, as explained in the last subsection: \emph{Some Properties of Finite Semidistributive Lattices}. \\

So by Lemma \ref{4} and Lemma \ref{1} a semidistributive lattice $L$ is dismantlable if and only if every element of $L$ covers at most two other elements and is covered by at most two other elements. We conclude, from the above, that a finite semidistributive lattice \emph{does not have a crown if and only if it does not have a crown of order $6$}. \\

\subsection{J\'onsson's conjecture and crowns of order 2n}

A nice feature of cycles of minimal pairs is that, although they are more blunt than $C-$cycles, they appear to be quite versatile. Consider the following cycle of minimal pairs: $\langle x_1, A_1 \rangle$, $\langle x_2, A_2 \rangle$, $\dots$, $\langle x_n, A_n \rangle$ with $n \geq 3$. We add two further assumptions to these cycles, \\

(1) $\{ \bigvee A_1, \bigvee A_2, \dots, \bigvee A_n \}$ is an antichain \\

(2) $x_i \nless \nless A_j$ if $j \neq i$ and $x_i \notin A_j$ \\

For brevity, we will call such cycles of minimal pairs \emph{crowned cycles}, for the following reason: the sub partial order of the lattice with elements $x_1, \bigvee A_1, x_2, \bigvee A_2, \dots, x_n, \bigvee A_n$ is a crown of order $2n$. Comparing with theorem \ref{fruitful}, we note that such cycles satisfy $|\{i = 1,2, \dots, n : \{x_i\} \ll A_i \}| = 0 < n/2$. So if a finite semidistributive lattice has a crowned cycle, the ideal would be to show that it also fails $(W)$. What we do have is that if there was a crowned cycle there would also have to be, by lemmas \ref{2}, \ref{3}, and \ref{4}, one consisting of three minimal pairs. \\

By theorem \ref{sampling possibilities}, all \emph{special} cycles of minimal pairs have to consist of at least five elements. So if that could be adapted to \emph{crowned} cycles of minimal pairs, it would imply that an $S-$lattice cannot have a crowned cycle of minimal pairs. Such a theorem would extends theorem \ref{fruitful} since crowned cycles were not excluded there. I will make the following example, the purpose of which is to show that crowned cycles in finite lattices do exist in general. \\
\begin{fig}
	\begin{center}
		\begin{tikzpicture}
		\node (c') at (1,0) {$c'$};
		\node (c) at (2,0) {$c$};
		\node (b) at (0,0) {$b$};
		\node (a') at (3,0) {$a'$};
		\node (b') at (-1,0) {$b'$};
		\node (a) at (-3,0) {$a$};
		
		\node (b+b') at (-2,1.7) {$b+b'$};
		\node (c+c') at (0,1.7) {$c+c'$};
		\node (a+a') at (2,1.7) {$a+a'$};
		
		\node (bb') at (-2,-1.7) {$\circ$};
		\node (cc') at (0,-1.7) {$\circ$};
		\node (aa') at (2,-1.7) {$\circ$};
		
		\node (0) at (0,-2.6) {$\circ$};
		\node (1) at (0,2.6) {$\circ$};
		
		\draw (1) -- (a+a');
		\draw (1) -- (b+b');
		\draw (1) -- (c+c');
		
		\draw (0) -- (aa');
		\draw (0) -- (bb');
		\draw (0) -- (cc');
		
		\draw(a+a') -- (c);
		\draw(a+a') -- (a');
		\draw(a+a') -- (a);
		
		\draw(b+b') -- (a);
		\draw(b+b') -- (b');
		\draw(b+b') -- (b);
		
		\draw(c+c') -- (b);
		\draw(c+c') -- (c');
		\draw(c+c') -- (c);
		
		\draw(aa') -- (a);
		\draw(aa') -- (a');
		
		\draw(bb') -- (b);
		\draw(bb') -- (b');
		
		\draw(cc') -- (c);
		\draw(cc') -- (c');
		\end{tikzpicture}
	\end{center}
\end{fig}
The crowned cycle in this structure is $\langle a ; \{b',b\} \rangle$, $\langle b ; \{c', c\} \rangle$, $\langle c ; \{a', a\} \rangle$. A nice feature of this lattice is that it also satisfies $(W)$. We therefore note that it cannot be semidistributive. If this lattice is semidistributive, J\'onsson's conjecture would be incorrect.

\section{Generalization to Finite Semidistributive Lattices}

Assume that $L$ is a finite semidistributive lattice. We see how our analysis would change if the requirement that $L$ is planar is assumed instead of Whitman's condition. This emphasises the difficulty of analysing finite semidistributive lattices in general. A lattice $L$ is planar if its H\"asse diagram can be drawn (on the plane) in such a way that no two lines drawn intersect each other. 

\subsection{The Snakes}

A manuscript by Henri M\"uhle entitled \textit{Finite dismantlable semidistributive lattices are planar}, \cite{HM}, was submitted in 2015 to arxiv. Henri M\"uhle had defended his PhD thesis on partial orders associated with reflection groups, at Universit\"at Wien in 2014, and is associated with LIAFA, Universit\'e Paris Diderot. The goal of the manuscript was to show that all finite dismantlable semidistributive lattices are planar. He approached the problem by using saturated crossing paths and a 1975 forbidden sublattice characterization of semidistributive lattices. There appear to be, however, examples from Ivan Rival and Bill Sands' paper \cite{PSL2} of finite dismantlable semidistributive lattices that are not planar. These are counterexamples to Lemma 3.5 of \cite{HM} where certain semidistributive lattices were considered. I had noted this and emailed H. M\"uhle about the error. \\

The counterexamples turn out to be finite dismantlable semidistributive lattices that are not planar, and are informally known as \emph{snakes} by Rival and Sands. They were denoted $S_n$ in \cite{PSL2}. The lattices $S_n$ were the new lattices introduced by Rival and Sands in their paper \cite{PSL2}, which will be described shortly. The authors unofficially called such structures snakes. The notation $S_n$ comes from \cite{PSL2}. \\

In \cite{PSL2}, Ivan Rival and Bill Sands proved a characterization of planar sublattices of free lattices. So as J\'onsson's conjecture is true, they had, in particular, proven that \\

\begin{theorem}[I.Rival and B.Sands: \cite{PSL2}, see Theorem 1.6]
	A finite sublattice of a free lattice is planar if and only if it does not contain the eight element boolean algebra $2 \times 2 \times 2$, or any snake $S_n$ as a sublattice.
\end{theorem}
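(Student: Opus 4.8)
The plan is to first reduce, via J\'onsson's conjecture (proved by Nation), to the equivalent statement that a finite \emph{S-lattice} $L$ is planar if and only if it contains neither $2\times 2\times 2$ nor any snake $S_n$ as a sublattice; this reduction is legitimate because the finite sublattices of a free lattice are exactly the S-lattices. The part of the ``only if'' direction coming from $2\times 2\times 2$ is clean: if $2\times 2\times 2$ embeds as a sublattice of $L$ then $b(L)\ge 3$, since an irredundant join representation inside a sublattice remains irredundant in the whole lattice, so breadth cannot drop when one passes to a larger lattice; and a finite planar lattice necessarily has breadth at most two, so $L$ is not planar. This same breadth consideration also handles the first step of the ``if'' direction: by $(SD_\vee)$, a breadth-three configuration $x=a\vee b\vee c$ (pairwise incomparable, irredundant) has eight distinct subjoins $\bigvee T$, $T\subseteq\{a,b,c\}$ --- for example $a\vee b=a\vee c$ would give $a\vee b=a\vee(b\wedge c)$ and hence $x=a\vee c$, contradicting irredundancy --- and a short further argument controlling the relevant meets (again via $(SD_\vee)$) produces a $2\times 2\times 2$ sublattice; so an S-lattice with no $2\times 2\times 2$ has $b(L)\le 2$, and by Lemma \ref{4} is dismantlable, i.e. (Lemmas \ref{1}, \ref{2}, \ref{3}) has no crown of order six.

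What remains after these reductions is the heart of the matter: among breadth-two dismantlable S-lattices, exactly the ones with no snake $S_n$ as a sublattice are planar. For the ``if'' direction here I would induct on $|L|$ along a dismantling sequence. Pick a doubly irreducible element $a$; because $a$ is doubly irreducible no join or meet of other elements equals $a$, so $L\setminus\{a\}$ is again a sublattice, hence a smaller breadth-two dismantlable S-lattice still containing no $2\times 2\times 2$ and no $S_n$, and is planar by the inductive hypothesis. The task is then to reinsert $a$ into a planar drawing of $L\setminus\{a\}$ without creating a crossing. This is where the ``no snake'' hypothesis is used: the obstruction to reinserting $a$ is a saturated crossing path through $a$ (in the sense of Kelly--Rival, as exploited in \cite{HM}), and if the reinsertion could not be carried out one traces that path through successive steps of the dismantling and shows it closes up into precisely the configuration of some $S_n$, contradicting the hypothesis. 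The ``only if'' direction within this class runs the same machinery in the other direction: on a planar drawing of $L$, the crossing-path analysis along a dismantling sequence shows that no sublattice of $L$ can realize the snake shape, since such a sublattice would force a crossing.

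The main obstacle I expect is exactly this crossing-path analysis: establishing that the snakes are the \emph{unique} obstructions among breadth-two dismantlable S-lattices --- equivalently, that failure to be planar in this class always comes from a snake-shaped sublattice. There is no single clean lemma available; one must track saturated crossing paths across an entire dismantling sequence and carry out a careful case analysis of the ways such a path can close up, with the snakes $S_n$ emerging one family at a time. The hypotheses of breadth two and semidistributivity, through Lemmas \ref{1}--\ref{4} and the ``no crown iff no crown of order six'' dichotomy noted earlier, are what keep this bookkeeping finite and tractable, but organizing it so that the snakes fall out as the complete list of obstructions is the genuinely hard step.
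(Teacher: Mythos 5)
The first thing to say is that the paper does not prove this theorem at all: it is quoted verbatim from Rival and Sands \cite{PSL2} (their Theorem 1.6) and used as a black box, so there is no in-paper argument to measure you against. Judged on its own terms, your outer reductions are sound and consistent with the toolkit the paper assembles in Section 3.1: the restriction to $S$-lattices via J\'onsson's conjecture, the fact that a sublattice of a planar lattice is planar and hence $2\times 2\times 2$ is excluded, and the passage from ``no $2\times 2\times 2$'' to breadth two to dismantlability via Lemmas \ref{1}--\ref{4}. (Two small caveats even here: your derivation that $a\vee b\neq a\vee c$ needs no semidistributivity, just irredundancy; and the step from ``eight distinct subjoins'' to an actual $2\times 2\times 2$ \emph{sublattice} requires controlling the meets of those subjoins, which is not short and in Rival--Sands genuinely uses Whitman's condition, not only $(SD_\vee)$.)

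The genuine gap is the core of the theorem, which you explicitly defer: the claim that among breadth-two dismantlable $S$-lattices the snakes are the complete list of obstructions to planarity. That deferred step \emph{is} the theorem --- everything before it is bookkeeping --- and the machinery you propose for it is precisely the one this paper warns against. You suggest tracking saturated crossing paths along a dismantling sequence ``in the sense of Kelly--Rival, as exploited in \cite{HM}''; but Section 4.1 of this paper exists to point out that \cite{HM} used exactly that approach to ``prove'' that all finite dismantlable semidistributive lattices are planar, and that the snakes $S_n$ themselves are counterexamples (they are dismantlable, semidistributive, and non-planar, refuting Lemma 3.5 of \cite{HM}). So a crossing-path induction that uses only dismantlability and semidistributivity cannot close up correctly; Whitman's condition must enter the analysis in an essential way to make the snakes ``fall out,'' and your sketch never identifies where $(W)$ is used in the inductive step. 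Until that is pinned down, the proposal reproduces the shape of the argument that already failed once in the literature rather than a proof of the Rival--Sands theorem.
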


A consequence is that,

\begin{theorem}[I. Rival and B.Sands]
	A finite sublattice of a free lattice of breadth two is planar if and only if it does not contain any snake $S_n$ as a sublattice.
\end{theorem}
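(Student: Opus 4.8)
The plan is to obtain the statement as a direct corollary of the preceding theorem of Rival and Sands; the only point requiring attention is that, once the breadth is assumed to be two, the clause ``does not contain $2 \times 2 \times 2$'' becomes automatic. First I would dispose of the forward implication: if $L$ is a planar finite sublattice of a free lattice, then by the preceding theorem $L$ contains neither $2 \times 2 \times 2$ nor any snake $S_n$, so in particular it contains no snake.

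For the converse, let $L$ be a finite sublattice of a free lattice with $b(L) \leq 2$, and suppose $L$ contains no snake $S_n$. By the preceding theorem it is enough to verify that $L$ does not contain $2 \times 2 \times 2$ as a sublattice. The first ingredient is that breadth does not increase under taking sublattices: if $K$ is a sublattice of a lattice $M$ and $x \in K$ has a join representation $S \subseteq K$, then $S$ is also a join representation of $x$ in $M$, so some $S' \subseteq S$ with $|S'| \leq b(M)$ is a join representation of $x$ in $M$; since joins computed in $K$ agree with those computed in $M$, $S'$ is a join representation of $x$ in $K$ as well, whence $b(K) \leq b(M)$. The second ingredient is the elementary computation $b(2 \times 2 \times 2) = 3$: writing $a, b, c$ for the three atoms, the top element equals $a \vee b \vee c$ but equals no join of two of $a, b, c$, so no proper subset of $\{a, b, c\}$ is a join representation of the top, which forces $b(2 \times 2 \times 2) \geq 3$ (and three clearly suffices). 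Combining the two ingredients, $2 \times 2 \times 2$ cannot be a sublattice of any lattice of breadth at most two; in particular it is not a sublattice of $L$. Since $L$ also contains no snake, the preceding theorem gives that $L$ is planar.

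There is essentially no obstacle here: the statement is a genuine corollary, and the only care needed is in the monotonicity of breadth under sublattices and in the breadth computation for the cube. One may replace the latter step by the remark that $2 \times 2 \times 2$ is distributive, hence semidistributive, but not dismantlable (removing its top and bottom leaves only atoms and coatoms, none doubly irreducible), so that Lemma \ref{4} already yields $b(2 \times 2 \times 2) > 2$. Finally, to see that the corollary is not vacuous, recall that by \cite{PSL2} each snake $S_n$ is a non-planar, semidistributive, dismantlable sublattice of a free lattice; thus Lemma \ref{4} gives $b(S_n) \leq 2$, while $S_n$ is not a chain, so $b(S_n) = 2$, and breadth-two sublattices of free lattices that contain a snake do indeed exist.
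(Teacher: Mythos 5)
Your proposal is correct and follows exactly the route the paper intends: the paper states this result only as ``a consequence'' of the preceding Rival--Sands theorem, and your argument supplies the one missing detail, namely that a breadth-two lattice cannot contain $2\times 2\times 2$ because breadth is monotone under sublattices and $b(2\times 2\times 2)=3$. Both your direct breadth computation and your alternative via Lemma \ref{4} (the cube is semidistributive but not dismantlable) are sound.
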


Two of the snakes are as follows. The labels $a_i$ and $b_i$, and the label $c$, for the $S_n$ also come from \cite{PSL2}. \\

\begin{fig} \label{snakes}
	\begin{center}
		\begin{tikzpicture} [scale=1.5]
		\node (1) at (0,0) {$1$};
		\node (a3) at (-0.5,-0.5) {$a_3$};
		\node (b3) at (0.5,-0.5) {$b_3$};
		\node (A) at (0,-1) {$A$};
		\node (B) at (0,-1.5) {$B$};
		\node (a2) at (-1,-1.25) {$a_2$};
		\node (C) at (-0.5,-2) {$C$};
		\node (D) at (0.5,-2) {$D$};
		\node (E) at (0, -2.5) {$E$};
		\node (F) at (0,-3) {$F$};
		\node (b2) at (1,-2.75) {$b_2$};
		\node (b1) at (0.5,-3.5) {$b_1$};
		\node (a1) at (-0.5,-3.5) {$a_1$};
		\node (0) at (0, -4) {$0$};
		
		\node (c) at (2.5,-2.5) {$c$};
		
		\draw (1) -- (a3) -- (A) -- (b3) -- (1);
		\draw (a3) -- (a2) -- (C) -- (B) -- (A);
		\draw (B) -- (D) -- (E) -- (C);
		\draw (D) -- (b2) -- (b1) -- (F) -- (E);
		\draw (b1) -- (0) -- (a1) -- (F);
		
		\draw (b3) -- (c) -- (a1);
		\end{tikzpicture}
		\begin{tikzpicture} [scale=1.5]
		\node (1) at (0,0.2) {$\circ$};
		\node (a4) at (-0.75,-0.25) {$a_4$};
		\node (b4) at (0.75,-0.25) {$b_4$};
		\node (A) at (0,-0.5) {$\circ$};
		\node (B) at (0,-0.75) {$\circ$};
		\node (a3) at (-1.33,-0.725) {$a_3$};
		\node (C) at (-0.66,-1) {$\circ$};
		\node (D) at (0.66,-1) {$\circ$};
		\node (E) at (0, -1.25) {$\circ$};
		\node (F) at (0,-1.5) {$\circ$};
		\node (b3) at (1.33,-1.375) {$b_3$};
		\node (G) at (-0.66,-1.75) {$\circ$};
		\node (H) at (0.66,-1.75) {$\circ$};
		\node (I) at (0, -2) {$\circ$};
		\node (J) at (0, -2.25) {$\circ$};
		\node (a2) at (-1.33,-2.225) {$a_2$};
		\node (K) at (-0.66, -2.5) {$\circ$};
		\node (L) at (0.66, -2.5) {$\circ$};
		\node (M) at (0, -2.75) {$\circ$};
		\node (N) at (0, -3) {$\circ$};
		\node (b2) at (1.33, -3) {$b_2$};
		\node (b1) at (0.66, -3.45) {$b_1$};
		\node (a1) at (-0.66, -3.6) {$a_1$};
		\node (0) at (0, -3.9) {$\circ$};
		
		\node (c) at (3,-1.85) {$c$};
		
		\draw (1) -- (a4) -- (A) -- (b4) -- (1);
		\draw (a4) -- (a3) -- (C) -- (B) -- (A);
		\draw (B) -- (D) -- (E) -- (C);
		\draw (D) -- (b3) -- (H) -- (F) -- (E);
		\draw (H) -- (I) -- (G) -- (F);
		\draw (G) -- (a2) -- (K) -- (J) -- (I);
		\draw (J) -- (L) -- (M) -- (K);
		\draw (M) -- (N) -- (b1) -- (b2) -- (L);
		\draw (b1) -- (0) -- (a1) -- (N);
		
		\draw (b4) -- (c) -- (a1);
		\end{tikzpicture}
	\end{center}
\end{fig}

The left lattice is $S_0$, having a single ``S'' pattern for a ``body'' whose ``head'' and ``tail'' are linked with $c$ as shown above. The right lattice is $S_1$, whose ``body'' consists of two ``S'' patterns linked together. The top square of the first ``S'' is glued to the bottom square of the second ``S''. Like with $S_0$, its ``head'' and ``tail'' are also linked with $c$ as shown above. \\

In general the ``body'' of $S_n$ has $n+1$ ``S'' patterns linked together. The top square of the first ``S'' is glued to the bottom square of the second ``S'', the top square of the second ``S'' is glued to the bottom square of the third ``S'', and so on until the top square of the $n$th ``S'' is glued to the bottom square of the $n+1$th ``S''. Finally, the extra element $c$ is linked with the ``head'' and ``tail'' of $S_n$, just like it was with $S_0$ and $S_1$. \\

In the next subsection, I will prove that these snakes are indeed sublattices of free lattice by explicitly defining an embedding $S_n \to FL(X)$.

\subsection{Snakes are sublattices of free lattices}

I will prove the following theorem by explicitly constructing an embedding $S_n \to FL(X)$. There are more straightforward ways of proving this particular theorem. However, the below construction is useful as it hints at the difficulty of embedding lattices into free lattices. \\

In \cite{PSL2}, Rival and Sands establish a \emph{forbidden sublattice} characterization of \emph{finite planar sublattices of free lattices}. The construction I made in the below proof is connected with the proof by I.Rival and B.Sands of one of the major theorems in \cite{PSL2}. It should be noted that a similar argument to the one below, likely different in some details, was probably made by Rival and Sands in \cite{PSL2}. The below should be seen as an introduction to some of the concepts and the content in \cite{PSL2}. \\

\begin{theorem}[Also, see \cite{PSL2} Theorem 1.2 and Lemma 4.1] \label{embedding snakes}
	The snakes $S_n$ are semidistributive and satisfy Whitman's condition.
\end{theorem}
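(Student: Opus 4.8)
The plan is to show that the snakes $S_n$ are semidistributive and satisfy Whitman's condition by exhibiting an explicit embedding $S_n \hookrightarrow FL(X)$ into a free lattice on a suitable generating set $X$, since both properties ($SD_\vee$, $SD_\wedge$, and $(W)$) are inherited by sublattices of free lattices. Indeed, by Whitman's solution of the word problem, $FL(X)$ satisfies $(W)$, and free lattices are well known to be semidistributive; hence any sublattice of $FL(X)$ enjoys all three properties. So it suffices to produce the embedding. I would take $X = \{a_1, b_1, c\}$ together with the ``internal'' generators needed to build the body, guided by the combinatorial description in Figure~\ref{snakes}: the generators $a_i$ and $b_i$ sit along the two ``sides'' of the snake, and $c$ is the extra element linking head to tail.

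First I would set up the recursive structure: since the body of $S_n$ consists of $n+1$ ``S''-patterns glued top-square-to-bottom-square, I would define the generators of the $k$-th ``S'' in terms of those of the $(k-1)$-st using the lattice operations $+$ and $\cdot$, so that the gluing relations (the top square of one ``S'' coinciding with the bottom square of the next) hold automatically in $FL(X)$. Concretely, each four-element square in the picture corresponds to a pair of generators $p,q$ together with $p+q$ and $p\cdot q$, and the snake's spine is the chain $a_1 < F < \cdots$ on one side and $b_1 < \cdots$ on the other; I would choose the labels so that the ``$D$'' covering $B$ and the ``$C$'' covering $B$, etc., get names that are explicit $+/\cdot$ words in the generators. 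Then I would link the head $b_{n+2}$ (or $b_3$ in the $S_0$ picture) and tail $a_1$ through $c$ by the relations $b_{\text{head}} \le c$ ... no: rather $c \le b_{\text{head}}$ and $c \le a_1$ are false; from the figure $c$ is covered by $b_3$ and covers $a_1$, so I would impose $a_1 \le c \le b_{\text{head}}$ via $c$ being, say, $a_1 + (\text{something}) = b_{\text{head}} \cdot (\text{something})$ — the exact word to be pinned down from the Hasse diagram.

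The substantive step is verifying that the assignment is actually injective and order-reflecting, i.e. that the images of the $O(n)$ elements of $S_n$ are pairwise distinct and comparable in $FL(X)$ exactly as in $S_n$. This is where Whitman's algorithm does the real work: for each covering (or non-covering) pair $u,v$ in $S_n$ I would check $w(u) \le w(v)$ or $w(u) \not\le w(v)$ in $FL(X)$ by structural induction on the words, using $(W)$ to reduce $w(u)\cdot\cdots \le \cdots + w(v)$ to comparisons of strictly smaller subwords. Because the snake is built recursively, I expect this verification to itself be a clean induction on $n$: assuming the sub-snake $S_{n-1}$ embeds correctly, the new ``S''-layer adds only a bounded number of new comparisons to check, and the gluing was arranged so the interface relations already hold. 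The main obstacle — and the reason the construction ``hints at the difficulty of embedding lattices into free lattices,'' as promised in the text — is choosing the generators so that no \emph{unwanted} collapses or comparabilities appear: a naive choice of words will typically force, e.g., $w(a_i) \le w(b_j)$ for some non-adjacent $i,j$ via a Whitman reduction one did not anticipate, and the delicate part is threading the recursion so the snake's non-planarity-inducing ``twist'' (the $c$-link closing up the body) does not introduce such a relation. Once the embedding is checked, semidistributivity and $(W)$ for $S_n$ follow immediately from the corresponding properties of $FL(X)$.
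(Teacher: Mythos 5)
Your overall strategy --- exhibit an explicit embedding $S_n \hookrightarrow FL(X)$ and let semidistributivity and $(W)$ be inherited from the free lattice --- is exactly the strategy of the paper's proof, so the framing is right. But the proposal stops precisely where the proof has to begin: no actual words in $FL(X)$ are written down, the head--tail link through $c$ is left as ``the exact word to be pinned down,'' and the verification that no unwanted comparabilities arise is acknowledged as ``the delicate part'' without being carried out. Since the entire content of the theorem lives in that construction, this is a genuine gap rather than a compressed but complete argument.

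There is also a concrete obstruction to the recursive scheme you sketch. You propose taking $X = \{a_1, b_1, c\}$ (plus internal generators), i.e.\ sending $a_1$ and $b_1$ to free generators. That cannot work: in $S_n$ the element $a_1$ is meet-reducible (in $S_0$ one has $a_1 = F \wedge c$, and dually $b_1 = F \wedge b_2$), whereas a generator $x$ of a free lattice is meet-prime ($x \geq u \wedge v$ implies $x \geq u$ or $x \geq v$), so the image of $a_1$ can never be recovered as a proper meet of two images. The paper's construction avoids this by never using the snake's reducible elements as generators: it starts from the pentagon $\{\, a,\ a+bc,\ a(b+c),\ (a+bc)(b+c),\ a(b+c)+bc \,\}$ built from three free generators $a,b,c$, adjoins $bc$ and $b+c$, and then \emph{inserts} fresh generators $x, y$ into the intervals $[a(b+c)+bc,\ b+c]$ and $[bc,\ (a+bc)(b+c)]$ via words of the form $(\alpha + w)\beta$ and $\alpha + w\beta$ (the device connected to Corollary 9.11 of \cite{FL}); every element of $S_0$ is then an explicit word (e.g.\ $\mathbf{a_1} = a(b+c)$, $\mathbf{c} = a$ --- note that $c$, being doubly irreducible in $S_0$, is the one snake element that \emph{can} be a generator), and the general $S_n$ is obtained by replacing $x, y$ with $x+y, xy$ and iterating the insertion. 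To repair your proposal you would need to replace the ``snake elements as generators'' idea with some such interval-insertion mechanism and then actually exhibit and verify the words.
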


\begin{proof} 
	
	For the labellings of elements we refer to diagram of $S_0$ in figure \ref{snakes} in the previous subsection. I will prove the special case when $S_n = S_0$. The proof can then be slightly extended to give a proof of the theorem. Below, the elements of $S_0$ will be presented in boldface: \textbf{1}, \textbf{a$_3$}, \textbf{b$_3$}, \textbf{A}, \textbf{B}, \textbf{a$_2$}, \textbf{C}, \textbf{D}, $\dots$, \textbf{b$_1$}, \textbf{0}. The symbols come from the diagram of $S_0$, shown above. \\
	
	To show that the $S_0$ is semidistributive and satisfies Whitman's condition, a sublattice of a free lattice isomorphic to $S_0$ will be constructed. In constructing this, the partial orders involved become very similar to the partial orders used by Rival and Sands in \cite{PSL2} which they label $G_n$ and $R_n$. We consider the free lattice on $\omega$ generators, $FL(\omega)$, with generating set $H = \{a,b,c, \dots \}$. None of these symbols will be boldfaced. In particular $c$, \textbf{c}, and \textbf{C} are all different since the first is a generator in $H$ and the other two are elements of $S_0$. \\
	
	Two ideas will be used, the first is that the lattice $ \{ a, a + bc, a(b + c), (a + bc)(b + c), a(b + c) + bc \}$ is isomorphic to the pentagon, $N_5$. The second is the idea of inserting generators into intervals; an application of this can be found in Corollary 9.11 of \cite{FL}. The corollary is a simple extension of Tschantz's theorem, see \cite{FL}. The insertion idea is as follows. Let $w$ be a generator, $w \in H$, and $[\alpha, \beta]$ an interval in $FL(\omega)$. Moreover, assume that $\alpha$ and $\beta$ can be expressed using generators from $H$ that are not $w$. Then we take $w' = \alpha + w\beta$, or we take $w' = (\alpha + w)\beta$. In both cases we have $w' \in [\alpha, \beta]$. \\
	
	Firstly, start off with the sublattice $N_5 = \{ a, a + bc, a(b + c), (a + bc)(b + c), a(b + c) + bc \}$ and add two extra elements $bc$ and $b + c$. We now take two generators from $H \backslash \{a,b,c\}$, $x$ and $y$. Insert $x$ and $y$ into $[a(b+c) + bc, b + c]$ and $[bc, (a + bc)(b + c)]$ respectively to form \textbf{a$_2$} $= (a(b+c) + bc + x)(b + c)$ and \textbf{b$_2$} $= (a(b+c) + bc + x)(b + c)$. The set $N_5 \cup \{ b+c, bc, (a(b+c) + bc + x)(b + c), (a(b+c) + bc + x)(b + c) \}$ now forms a partial order that resembles $G_n$ or $R_n$ as described in \cite{PSL2}. \\
	
	Now take the meet, \textbf{C}, of \textbf{a$_2$} with $(a + bc)(b + c)$ and the join, \textbf{D}, of \textbf{b$_2$} with $a(b + c) + bc$. Due to free lattices satisfying Whitman's condition we can set \textbf{B} = \textbf{C} $+$ \textbf{b$_2$}, and \textbf{E} = \textbf{a$_2$}\textbf{D}. After doing all this, we now set \textbf{a$_3$} = \textbf{a$_2$} $+$ \textbf{b$_2$} and \textbf{b$_1$} = \textbf{a$_2$}\textbf{b$_2$}. Continuing with this process and then simplifying the expressions, we obtain: \\
	
	\textbf{1} $= a + (a(b+c) + bc + x)(b + c)$ \\
	
	\textbf{b$_3$} $= a + bc$ \\
	
	\textbf{a$_3$} $= (a(b+c) + bc + x)(b + c) + (a + bc)(b + c)y$ \\
	
	\textbf{a$_2$} $= (a(b+c) + bc + x)(b + c)$ \\
	
	\textbf{A} $= (a + bc)((a(b+c) + bc + x)(b+c) + (a+bc)(b+c)y)$ \\
	
	\textbf{B} $= (a(b+c) + bc + x)(a + bc)(b + c) + (a + bc)(b + c)y$ \\
	
	\textbf{C} $= (a(b+c) + bc + x)(a + bc)(b + c)$ \\
	
	\textbf{D} $= (a + bc)(b+c)y + a(b + c) + bc$ \\
	
	\textbf{E} $= ((a + bc)(b+c)y + a(b+c) + bc)(a(b+c) + bc + x)$ \\
	
	\textbf{F} $= a(b+c) + ((a + bc)(b+c)y + bc)(a(b+c) + bc + x)$ \\
	
	\textbf{b$_2$} $= (a+bc)(b+c)y + bc$ \\
	
	\textbf{b$_1$} $= ((a + bc)(b + c)y + bc)(a(b+c) + bc + x)$ \\
	
	\textbf{a$_1$} $= a(b + c)$ \\
	
	\textbf{0} $= a((a + bc)(b + c)y + bc)$ \\
	
	\textbf{c} $= a$ \\
	
	To see how the proof of this special case $S_n = S_0$ can be transferred to the more general case, the following can be said: That these structures are dismantlable and non-planar follows from the arguments just given. To construct sublattices of $FL(\omega)$ isomorphic to the $S_n$ when $n \geq 1$, we replace $x$ and $y$ in the above argument with $x+y$ and $xy$. Then continue to insert more elements of the generating set of $FL(\omega)$ in such a way that partial orders resembling the $G_n$ and $R_n$ from \cite{PSL2} are made with added top and bottom elements. How the construction proceeds is similar to the above proof.
	
\end{proof}

\subsection{Extending Rival and Sands' paper to finite planar semidistributive lattices}

We look at the limitations of the semidistributive laws on finite lattices. Specifically, we narrow our attention to finite planar lattices. From J\'onsson's conjecture, one may guess that semidistributivity is a very strong condition for finite lattices. We describe a limitation of this point of view by describing the difficulty in obtaining a similar characterization for finite planar semidistributive lattices. In the process we look at Rival and Sands characterization of finite sublattices of free lattices that are also planar. And as this is done, an error in Henri Muhle's manuscript (which would have contributed to this characterization problem) is pointed out. Determining a characterization for planar semidistributive lattices appears to be a much more daunting task. Inspired by the approaches used in \cite{HM}, we describe a potential direction towards the breadth two / dismantlable case of this problem. \\

An option one could consider is a forbidden sublattice characterization of \emph{breadth-two} planar semidistributive lattices. Such a set, $\mathcal{F}$, of forbidden sublattices should be minimal. We will assume that they satisfy the following (1) Each member of $\mathcal{F}$ is a finite dismantlable semidistributive non-planar lattice, (2) A finite dismantlable semidistributive lattice is planar if and only if it does not contain a member of $\mathcal{F}$ as a sublattice. \\

Below, proposition \ref{sparse}, is a partial result that I made and proved. The approach from \cite{HM} that I have adopted is the idea of, given a finite lattice with top element $1$ and bottom element $0$, using the elements that cover $0$ and using the elements that are covered by $1$. \\

\begin{proposition} \label{sparse}
	There is an $\mathcal{F}$, as described above, such that for all $F \in \mathcal{F}$ the following holds. \\
	
	Let $0$ be its bottom and $1$ be its top element of $F$. Then in $F$, $1$ covers two distinct elements $a$ and $b$, $0$ is covered by two distinct elements $c$ and $c$, and $c + d < ab$. Furthermore, one can find $a',b',c',d' \in F$ with the following properties. \\
	
	(1) $a' \neq ab$ is covered by $a$, $b' \neq ab$ is covered by $b$, $c' \neq c + d$ covers $c$, and $c' \neq c + d$ covers $d$. \\
	
	(2) $|\{a',b',c',d'\}| = 4$, or $|\{a',b',c',d'\}| = 3$ with $a' \neq b'$ and $c' \neq d'$.
\end{proposition}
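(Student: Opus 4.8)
The plan is to produce $\mathcal{F}$ by starting from the known list of forbidden sublattices for planarity among finite dismantlable semidistributive lattices — the snakes $S_n$ (recall that breadth-two finite sublattices of free lattices are planar iff they contain no $S_n$, and that $S_n$ is dismantlable, semidistributive, and non-planar by Theorem \ref{embedding snakes} and the surrounding discussion) — and to check that each $S_n$ already has the required local structure at its top and bottom. Concretely, I would first argue that any minimal family $\mathcal{F}$ satisfying conditions (1) and (2) of the proposition's preamble (each member finite dismantlable semidistributive non-planar, and a finite dismantlable semidistributive lattice is planar iff it omits every member of $\mathcal{F}$) can be taken to consist of lattices with a bottom $0$ and top $1$: if $F$ had reducible top or bottom one could pass to a proper sublattice, contradicting minimality, or absorb the top/bottom into the interval structure. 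The heart of the argument is then a direct inspection of the Hasse diagram of $S_n$ from Figure \ref{snakes}: there $1$ covers exactly the two elements $a_{n+2}$ and $b_{n+2}$ (the ``head'' of the top $S$), $0$ is covered by exactly the two elements $a_1$ and $b_1$ (the ``tail''), and one reads off from the picture that $c$ is comparable to neither, and that the meet $a_{n+2} b_{n+2}$ lies strictly above the join $a_1 \vee b_1$ because the whole ``S-body'' sits between them.

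Next I would nail down the covers $a', b', c', d'$ demanded in item (1) of the statement. Looking at the top square of the topmost ``S'' in $S_n$: the element $a$ (i.e.\ $a_{n+2}$) covers $A$ and $a_{n+1}$, and since $A = ab$ is the meet, I would set $a'$ to be whichever of these is not $ab$; similarly $b = b_{n+2}$ covers $A$ and some $b'\neq ab$ coming from the square, and dually at the bottom, $c = a_1$ and $d = b_1$ each cover $0$'s join-partners or interior elements of the bottom square, giving $c',d' \neq c+d$. The point is that in the snake's explicit diagram these four chosen covers are genuinely there and distinct in the right pattern: either all four are pairwise distinct, or the only coincidences are $a'=c'$ type identifications forced by the small snakes ($S_0$), in which case one still has $a'\neq b'$ and $c'\neq d'$, which is exactly alternative (2). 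I would verify $|\{a',b',c',d'\}|\in\{3,4\}$ by a short case split on how small $n$ is — $S_0$ potentially giving the $3$-element case and $S_n$ for $n\geq 1$ giving the $4$-element case because the body is long enough to separate top and bottom squares.

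The main obstacle I anticipate is twofold. First, one must be sure that the snakes really do form (or can be completed to) a family $\mathcal{F}$ satisfying the stated abstract requirements — i.e.\ that restricting attention to the dismantlable/breadth-two world, planarity is controlled by omitting the $S_n$ and that no extraneous forbidden lattices intrude; this is where I would lean hardest on Rival and Sands' characterization, and I would need to confirm that every member of their list that is dismantlable is among the $S_n$ (the $2\times2\times2$ cube is not dismantlable, so it drops out, which is convenient). Second, extracting the four covers with the precise non-coincidence pattern of item (2) requires an honest look at the combinatorics of the glued squares rather than a slogan: the top and bottom squares of a snake overlap when $n$ is smallest, and I must check that even then the weaker ``$a'\neq b'$, $c'\neq d'$'' clause holds. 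I expect the rest — semidistributivity, Whitman's condition, non-planarity, dismantlability of the $S_n$ — to be quotable from Theorem \ref{embedding snakes} and the discussion of snakes, so the real work is this finite diagram-chase, which is routine but must be done carefully for the small cases.
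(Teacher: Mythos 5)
Your plan rests on taking $\mathcal{F}$ to be (essentially) the list of snakes $S_n$ and then reading the required covers off their Hasse diagrams, leaning on Rival and Sands' forbidden-sublattice theorem to guarantee that no other forbidden lattices intrude. This is where the argument breaks: Rival and Sands' characterization (``a finite sublattice of a free lattice is planar iff it contains no $2\times2\times2$ and no $S_n$'') applies only to finite sublattices of free lattices, i.e.\ to lattices that are semidistributive \emph{and} satisfy Whitman's condition. The proposition concerns a minimal forbidden-sublattice family for all finite dismantlable (equivalently, breadth-two) semidistributive lattices, with no Whitman hypothesis; the surrounding text makes clear that no complete list is known in that generality, which is precisely why the result is only ``partial'' and describes structural features that \emph{any} member of such an $\mathcal{F}$ must have. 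You flag this as your main anticipated obstacle, but the fallback you offer (``or can be completed to'' such a family) does not rescue the argument: if extraneous non-Whitman forbidden lattices exist, your diagram inspection of the $S_n$ says nothing about them, and they are exactly the objects the proposition is about. The paper itself separates the two situations at the end of its proof --- the snakes are the answer ``when Whitman's condition is assumed,'' and a different argument is needed otherwise.

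The paper's actual proof is an induction on $|L|$ for finite breadth-two semidistributive $L$. One fixes the two covers $a,b$ of $1$ and the two covers $c,d$ of $0$ (exactly two in each case, by the breadth-two/covering lemma), and splits into cases according to whether $ab \parallel c+d$, $ab < c+d$, or $ab = c+d$; in the latter two cases semidistributivity rules out various intermediate elements and lets one pass to a strictly smaller non-planar sublattice, so the inductive hypothesis forces the configuration $c+d < ab$ onto some member of $\mathcal{F}$ inside $L$. The existence and distinctness of $a',b',c',d'$ then come from minimality of $\mathcal{F}$ under sublattice containment (if $a$ were join irreducible, or if $a'=b'$, or if $a'=c'$ and $b'=d'$, one exhibits a proper non-planar sublattice, contradicting minimality), not from inspecting any explicit lattice. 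To repair your proposal you would need to replace the appeal to Rival--Sands with an argument of this kind that works for an arbitrary member of a minimal $\mathcal{F}$.
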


\begin{proof}
We first show that there is an $\mathcal{F}$ as specified above such that, for all $F \in \mathcal{F}$, the top element of $F$ covers (exactly) two elements $a,b$, the bottom element of $F$ covers (exactly) two elements $c,d$ and $c + d < ab$. This claim will be shown by induction on the number of elements of a finite breadth-two semidistributive lattice $L$. \\

If $|L| \leq 4$ then the claim is trivial. So assume that this is true for some $n \geq 4$ when $|L| \leq n$. And let $L$ be as specified above with $|L| = n + 1$. Because $L$ is a finite semidistributive lattice, that is breadth is two is the same as requiring that every element covers or is covered by at most two elements (see \cite{PSL}). Let $0$ and $1$ be the bottom and top element of $L$ respectively, and assume without loss of generality that $1$ covers exactly two elements $a$ and $b$ and $0$ is covered by exactly two elements $c$ and $d$. If $\{a,b\} \cap \{c,d\} \neq \varnothing$ then $L \cong 2 \times 2$ (the four element boolean algebra/lattice) or $L \cong N_5$ (the pentagon) and in these cases the proposition is clear (the case involving $N_5$ follows by strong induction). So assume that $\{a,b\} \cap \{c,d\} = \varnothing$. Four cases will be looked at. \\

\textbf{Case $ab \parallel c + d$ :} Without loss of generality assume that $c + d \leq a$ and $ab \geq d$. Then we this the following figure possibly with $c+d = a$ or $d = ab$ resulting in four subcases. \\

\begin{center}
\begin{tikzpicture}

\node (0) at (0,0) {$0$};
\node (c) at (-1,1) {$c$};
\node (d) at (1,1) {$d$};
\node (c+d) at (0,2) {$c+d$};
\node (ab) at (2,2) {$ab$};
\node (a) at (1,3) {$a$};
\node (b) at (3,3) {$b$};
\node (1) at (2,4) {$1$};

\draw (0) -- (c) -- (c+d) -- (a) -- (1) -- (b) -- (ab) -- (d) -- (0);
\draw (d) -- (c+d);
\draw (a) -- (ab);

\end{tikzpicture}
\end{center}

In any such case the sublattice generated by $\{a,b,c,d\}$ is isomorphic to the lattice obtained from $2 \times n$ (where $n = 2,3,4$) by adding elements $x$ such that $(i,k) < x < (i,k+1)$, for some $i = 1,2$ and $k = 1, \dots, n-2$ , and $2 \times n \cup \{x\}$ is a sublattice of $L$. \\

Denote the sublattice generated by $\{a,b,c,d\}$ by $\langle a,b,c,d \rangle$. Since the structure is a lattice it is impossible to find an $x \in L$ satisfying $(1,k) < (0,l)$ for some $k < l$. Since $L$ is semidistributive, $1$ covers $b$, and $0$ is covered by $c$, it is impossible to find a $y \in L$ satisfying $(0,k) < y < (1,l)$ for some $0 \leq k \leq l \leq n - 1$ (consider $\{c,y,b\}$, this triple would violate join or meet semidistributivity). \\

So now we can say the following, because the $(0,k)$ are meet reducible and the $(1,k)$ are join reducible. By the above, $L$ is not planar if and only if the sublattice $M_{k,l}^0$ of $L$ generated by $[(0,k), (0,l)] \cup \{ (0,k+1), (0,k+2), \dots, (0,l) \}$ or $M_{k,l}^1$, the dual of $M_{k,l}^0$, is not planar for some $k$ and $l$. Now, $\bigwedge M_{k,l}^0$ is meet irreducible; for otherwise $(0,k)$ would be covered by at least three elements contrary to assumption. Dually, $\bigvee M_{k,l}^1$ is join irreducible. \\

So if $L$ is not planar assume without loss of generality that $M_{k,l}^0$ is not planar for some $k$ and $l$. Then $N_{k,l}^0$, the sublattice of $M_{k,l}^0$ obtained by deleting $\bigwedge M_{k,l}^0$, is not planar and $|N_{k,l}^0| < |L|$ so by induction $N_{k,l}^0$ contains a sublattice isomorphic to a lattice as specified in the claim. The converse is clear as all members of $\mathcal{F}$ are assumed to be non-planar. \\

\textbf{Case $ab < c + d$ :} Then without loss of generality assume that $d \leq ab < c + d \leq a$. We obtain this figure possibly with $d = ab$ or $c + d = a$.

\begin{center}
	\begin{tikzpicture}
	
	\node (0) at (0,0) {$0$};
	\node (c) at (-2,1) {$c$};
	\node (d) at (2,1) {$d$};
	\node (c+d) at (-1,3) {$c+d$};
	\node (ab) at (1,2) {$ab$};
	\node (a) at (-2,4) {$a$};
	\node (b) at (2,4) {$b$};
	\node (1) at (0,5) {$1$};
	
	\draw (0) -- (c) -- (c+d) -- (a) -- (1) -- (b) -- (ab) -- (d) -- (0);
	\draw (c+d) -- (ab);
	
	\end{tikzpicture}
\end{center}

If $ab < x < c+d$ for some $x \in L$ then $x+b = 1$ and $cx = 0$. But then $L$ is neither join nor meet semidistributive with $\{c,x,b\}$. So we have these covering relations: $a \prec 1$, $b \prec 1$, $0 \prec c$, $0 \prec d$, and $ab \prec c+d$. Moreover, it is impossible to find a $y \in L$ such that $d < y < a$, $y \parallel ab$, and $y \parallel c+d$ because otherwise $\{c,y,b\}$ would violate join and meet semidisributivity. \\

And since $L$ is a lattice, it is impossible for $c < b$. Now we use strong induction. If $L$ is non-planar then $L_d$, the sublattice generated by $[d,b] \cup \{c+d, a, 1\}$ or $L_a$, the sublattice generated by $[a,c] \cup \{ab, d, 0\}$, is not planar. \\

Without loss of generality assume that $L_d$ is not planar. then $d = \bigwedge L_d$, hence $|L_d| < |L|$ and by induction $L_d$ contains a sublattice isomorphic to a member of $\mathcal{F}$ as specified in the claim. \\

\textbf{Case $ab = c+d$ :} Let $e = ab = c+d$. Without loss of generality suppose that there is a three element antichain $\{x,e,y\}$ such that $c < x < b$ and $d < y < b$. Since the breadth of $L$ is two, lemma \ref{see} implies that every element covers, or is covered by, at most two elements. Hence, $xy = cd = 0$. But then $a + x = a + y = 1$, but $a + xy = a$ violating the join semidistributive laws. \\

\begin{center}
	\begin{tikzpicture}
	
	\node (0) at (0,0) {$0$};
	\node (c) at (-1,1) {$c$};
	\node (d) at (1,1) {$d$};
    \node (x) at (-1.1,2) {$x$};
    \node (e) at (0,2) {$e$};
    \node (y) at (1.2,2) {$y$};
	\node (a) at (-1,3) {$a$};
	\node (b) at (1,3) {$b$};
	\node (1) at (0,4) {$1$};
	
    \draw (0) -- (c) -- (e) -- (d) -- (0);
    \draw (1) -- (a) -- (e) -- (b) -- (1);
    \draw (c) -- (x) -- (b);
    \draw (d) -- (y) -- (b);

	\end{tikzpicture}
\end{center}

So such antichains, $\{x,e,y\}$, are impossible to find in $L$. Hence induction can be used. If $L$ is not planar then $L_d$, the sublattice generated by $[d,b] \cup \{a,1\}$, or three similarly defined sublattices ($L_c$, $L_b$, and $L_a$), is not planar. Without loss of generality assume that $L_d$ is not planar. then as $|L_d| < |L|$, $L_d$ has a sublattice isomorphic to a member of $\mathcal{F}$ as specified in the claim. \\

This proves the claim, in particular $\mathcal{F}$ can be defined so that all of its members have the following as a sublattice (with $0$ being the bottom element, $1$ being the top element, $a \prec 1$, $b \prec 1$, $0 \prec c$, $0 \prec d$, and $c+d < ab$):

\begin{center}
	\begin{tikzpicture}
	
	\node (0) at (0,0) {$0$};
	\node (c) at (-1,1) {$c$};
	\node (d) at (1,1) {$d$};
	\node (c+d) at (0,2) {$c+d$};
	\node (ab) at (0,3) {$ab$};
	\node (a) at (-1,4) {$a$};
	\node (b) at (1,4) {$b$};
	\node (1) at (0,5) {$1$};
	
	\draw (1) -- (a) -- (ab) -- (b) -- (1);
    \draw (ab) -- (c+d);
    \draw (0) -- (d) -- (c+d) -- (c) -- (0); 
	
	\end{tikzpicture}
\end{center}

When Whitman's condition is assumed, the members of $\mathcal{F}$ are (as shown by Rival and Sands in \cite{PSL2}) the snakes $S_n$; all of whom have the above sublattice as specified above. 

When Whitman's condition is not assumed, and let $F \in \mathcal{F}$. The following can be said: If $a$ is join irreducible, then at least one of the following proper sublattices is non-planar: $[ab,b]$, $[0,b]$. So as the members of $\mathcal{F}$ are assumed to be minimal with respect to set inclusion, $a$ and $b$ are join reducible and $c$ and $d$ are meet reducible. This proves $(1)$; let $a'$, $b'$, $c'$, and $d'$ be as specified in this proposition. \\

If $a' = b'$ then $a'b' \leq ab$ and, as the breadth two condition implies by \ref{see} that every element of $F$ covers or is covered by at most two elements, we have this decomposition $F = [0, ab] \cup \{ab, a, b, a+b = 1\}$ contrary to the assumption that the members of $\mathcal{F}$ are minimal with respect to set inclusion. So $a' \neq b'$ and by symmetry $c' \neq d'$. \\

Now suppose that $a' = c' = x$ and $b' = d' = y$. Then considering the Hasse diagram of the lattice and ``stretching'' $x$ and $y$ apart from each other (until the line segments $xa$, $cx$, $yb$, $dy$ no longer intersect any of the other line segments of the Hasse diagram) shows that the lattice $F$ is not minimal with respect to set inclusion as one can consider the proper (non-planar) sublattice $F \backslash \{x,y\}$. This proves $(2)$.

\end{proof} \\

An interesting feature of this proposition is that such forbidden sublattices appear to resemble the snakes from Rival and Sands described previously.


\begin{thebibliography}{99}
\bibitem{FL} R. Freese, J. Je\v{z}ek, 
and J. B. Nation, Free Lattices 
\textit{Mathematical Surveys and Monographs, Volume 42 (AMS)}, 1995

\bibitem{PSL} I. Rival and B. Sands, 
Planar Sublattices of a Free Lattice. I, \textit{Can. J. Math., Vol. XXX, No. 6, 1978, pp. 1256-1283}

\bibitem{PSL2} I. Rival and B. Sands, 
\textit{Planar sublattices of a free lattice. II}, Can. J. Math., Vol. XXXI, No. 1, 1979, pp. 17-34

\bibitem{FSFL} J.B. Nation, 
Finite Sublattices of a Free Lattice, \textit{Transactions of the American Mathematical Society Volume 269, Number 1, January 1982}

\bibitem{VL} P. Jipsen and H. Rose, 
Varieties of Lattices, \textit{Springer Verlag, Lecture Notes in Mathematics 1533} 1992

\bibitem{STL} 
H.S. Gaskill, G. Gr\"atzer, and C.R. Platt, 
Sharply Transferable Lattices 
\textit{Can. J. Math., Vol. XXVII, No. 6, 1975, pp. 1246-1262}

\bibitem{SRTL} H.S. Gaskill,
Some results on transferability in lattices and semilattices,
\textit{PhD, Simon Fraser University, April 1972}

\bibitem{ARJN} 
B. J\'onsson and J.B. Nation, 
A Report on Sublattices of a Free Lattice, \textit{Contributions to Universal Algebra, 
	Szeged (Hungary)}, 1975

\bibitem{HM} H. M\"uhle, 
\textit{Finite Dismantlable Semidistributive Lattices are Planar}, arXiv:1501.02617, 2015
\end{thebibliography}
\end{document}